\newtheorem{theorem}{Theorem}[section]
\newtheorem{lemma}[theorem]{Lemma}
\newtheorem{proposition}[theorem]{Proposition}
\newtheorem{corollary}[theorem]{Corollary}
\theoremstyle{coloreddef}
\newtheorem{definition}[theorem]{Definition}
\newtheorem{proposition-definition}[theorem]{Proposition-Definition}
\theoremstyle{remark}
\newtheorem*{remark}{Remark}
\newtheorem*{example}{Example}
\newtheorem*{examples}{Examples}
\theoremstyle{problem}
\newcommand{\oo}{\mathcal{O}}
\newcommand{\mc}{\mathcal}
\newcommand{\mbb}{\mathbb}
\DeclareMathOperator{\GL}{GL}
\DeclareMathOperator{\lv}{lv}
\definecolor{mypink1}{RGB}{231, 62, 156}
\definecolor{mypink2}{RGB}{219, 48, 122}
\definecolor{mygreen}{RGB}{107, 165, 51}
\definecolor{mygreen2}{RGB}{16,131,58}
\definecolor{teal}{RGB}{72, 146, 182}
\definecolor{lightpurple} {RGB}{172, 82, 204}
\definecolor{darkpurple}{RGB}{103,17,168}
\definecolor{myred1}{RGB}{128,0,32}
\definecolor{myblue1}{RGB}{24,189,225}
\definecolor{myblue2}{RGB}{16,25,144}
\begin{document}

\title{An approach to harmonic analysis on non-locally compact groups II: an invariant measure on groups of ordered type} \date{} \author{Raven Waller\thanks{This work was completed while the author was supported by an EPSRC Doctoral Training Grant at the University of Nottingham.}}

\maketitle

\begin{abstract}
We consider a class of non-locally compact groups on which one may define a left-invariant, finitely additive measure taking values in some finitely generated extension of the field $\mbb{R}$ of real numbers. In particular, we recover previously studied special cases, along with the case of reductive algebraic groups defined over higher dimensional local fields.
\end{abstract}

\section{Introduction} \label{sec:introduction}
The existence of a real-valued Haar measure $\mu$ on a topological group $G$ is essentially equivalent to $G$ being locally compact. If one relaxes the condition that $\mu$ take values in $\mbb{R}$, one may define a measure on slightly more general groups. For example, Fesenko (\cite{fesenko-aoas1}) defined a finitely-additive invariant measure on the additive group of a two-dimensional local field $F$ taking values in the two-dimensional field $\mbb{R} (\!( X )\!)$ of formal Laurent series with real coefficients.

Fesenko's construction has many influences from non-standard analysis. In particular, the indeterminate $X$ is considered to be an infinitesimal positive element, so that $X>0$, and for any positive real number $a$ and positive integer $n$ we have $X^n < aX^{n-1}$ (and so in particular $X$ is smaller than any positive real number). Similar ideas were behind the introduction of the concept of a level structure over a locally compact group $X$, which was studied by the author in \cite{waller-level} in part in order to further generalise Fesenko's measure.

While there have been several generalisations of Fesenko's construction (for example, by Morrow in \cite{morrow-2dint} and \cite{morrow-gln}, by van Urk in \cite{wester-fourier}, and by the author in \cite{waller-GL2}), such constructions have generally only appeared for specific examples, and any hint of a general theory has been (until now) remarkably absent.

Due to the author's background in studying higher dimensional local fields, the initial search for the general theory focused on the idea of a ``valuation space". However, it quickly became apparent that the presence of a valuation was not such an important aspect in the existing examples - rather, it was certain topological properties that just happened to be a consenquence of the existence of a valuation. The development of this idea led to the notion of level structure - a topological framework in which higher local fields and the intermediary ``valuation spaces" existed, but were no longer the primary focus (though still serving as important examples).

Once one has a group equipped with a level structure, one can define the notion of level-compactness. A weaker version of compactness that works ``on the level", this appears to be enough to get around many obstacles which appear due to the lack of compactness of groups related to higher dimensional number theory.

Unfortunately, there are still several other obstacles which make it difficult to generalise analytic proofs using level structures. For example, it is not true that every subset of $\mbb{R}(\!(X)\!)$ which is bounded above (with respect to the natural ordering on this field considering $X$ as an infinitesimal) has a supremum, and a sequence which is bounded above by some constant may have a limit which exceeds this bound. This make it difficult to generalise, for instance, the Riesz Representation Theorem, which is a fundamental result in real-valued measure theory.

In this paper, we thus work with a slightly narrower class of spaces which retain enough similarity with higher local fields that the existing constructions of \cite{fesenko-aoas1} and \cite{waller-GL2} can be applied. Although this means sacrificing a fair amount of generality, it does leave us in the position to give some of the first general statements regarding ``higher dimensional" measure. As an example, we have the following.

\begin{theorem}
Let $R$ be a Henselian ring with locally compact residue field $k$, and let $G$ be an algebraic group defined over $R$. If there exists a ``nice" level structure $\mc{L}_G$ for $G(R)$ over $G(k)$ of some elevation $e \ge 0$, then there is a left-invariant, finitely additive measure on $G(R)$ which takes values in $\mbb{R} (\!( X_1 )\!) \cdots (\!( X_e)\!)$ and is sufficiently compatible with the Haar measure on $G(k)$.
\end{theorem}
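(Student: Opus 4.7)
The plan is to proceed by induction on the elevation $e$, building the measure layer by layer through the tower provided by $\mc{L}_G$. For the base case $e = 0$, the level structure collapses to a genuine locally compact topology on $G(R)$, so that $G(R)$ is a locally compact topological group and one simply takes its classical left Haar measure, valued in $\mbb{R}$. Here the reduction map $\pi \colon G(R) \to G(k)$ is surjective by smoothness of $G$ together with Hensel's lemma for the Henselian ring $R$; compatibility of the Haar measure on $G(R)$ with that on $G(k)$ then follows from the uniqueness of Haar measure up to a positive scalar.

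For the inductive step, assume the theorem holds for elevation $e-1$. Invoking the niceness of $\mc{L}_G$, I would select a level-compact open subgroup $K \subseteq G(R)$ whose image $\pi(K) \subseteq G(k)$ is open and compact, and let $K_1 = K \cap \ker(\pi)$ play the role of a ``first congruence subgroup''. The niceness condition should ensure that $K_1$ inherits a level structure of elevation $e-1$, so by induction it carries a left-invariant measure $\mu_1$ valued in $\mbb{R}(\!(X_1)\!) \cdots (\!(X_{e-1})\!)$. Embedding this measure into $\mbb{R}(\!(X_1)\!) \cdots (\!(X_e)\!)$ via multiplication by $X_e$ --- which represents the infinitesimal scale of the newly added level --- one defines the measure of a level-compact $S \subseteq K$ by partitioning $S$ across cosets of $K_1$ in $K$: for each coset $gK_1$, translate $S \cap gK_1$ back to $K_1$, evaluate $X_e \mu_1$, and sum, weighting by the Haar measure $\mu_k$ on the finitely many cosets of $\pi(K_1)$ inside $\pi(K)$. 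Finally, extend to all of $G(R)$ by left translation using a decomposition $G(R) = \bigsqcup_g gK$ together with the expected left-invariance.

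The main obstacle I anticipate is showing well-definedness and full left-invariance of the resulting assignment. Independence from the choice of coset representatives for $K_1$ in $K$ (and, more generally, of a section of $\pi$) amounts to checking that the inductive measure $\mu_1$ is preserved under conjugation by elements of $K$; this should follow from the compatibility of the niceness of the level structure with the group law, but must be verified delicately against the inductive construction. Finite additivity across partitions that cross many $K$-cosets then requires that the relevant sums make genuine sense in $\mbb{R}(\!(X_1)\!) \cdots (\!(X_e)\!)$ --- that is, that the coefficient at each monomial $X_1^{a_1} \cdots X_e^{a_e}$ involves only finitely many, or at worst well-convergent, contributions. This is where the ``ordered type'' hypothesis of the paper's title is essential, as it should precisely guarantee that every level-compact set decomposes into a manageable collection of translates at each layer, so that the a priori transfinite Laurent sums collapse to bona fide elements of the target field.
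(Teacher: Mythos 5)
Your strategy---induction on the elevation $e$, peeling off one Laurent variable at a time via a congruence filtration $K_1 = K \cap \ker(\pi)$---is genuinely different from what the paper does, and it has two serious gaps. First, the framework supplies no global reduction homomorphism $\pi \colon G(R) \to G(k)$: a level structure is merely a family of subsets of $G(R)$ indexed by $\mc{U}(1) \times \mbb{Z}^e$ satisfying the axioms of Definition \ref{levelstructuredef}, and even in the motivating example of an $n$-dimensional field $F$ levelled over $F_1$ the ``reduction'' is only defined on $O_F$, not on all of $F$. Consequently your $K_1$ need not exist, and even when it does there is no reason it inherits a \emph{nice} elevation-$(e-1)$ level structure over a locally compact base: the single base $G(k)$ governs the $\mc{U}(1)$-coordinate of \emph{every} layer simultaneously, rather than sitting at the bottom of a tower of successive quotients. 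Your base case is also unsupported---elevation $0$ does not by itself force $G(R)$ to be locally compact.

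Second, and more fundamentally, you have misplaced the central difficulty. The measure sought is only finitely additive and lives on the ring of ddd-sets, so there are no ``transfinite Laurent sums'' to control; the ordered-type hypothesis is not a convergence device. Its actual role is the intersection property $D_1 \cap D_2 \in \{D_1, D_2, \emptyset\}$, which drives the index computations (Corollary \ref{strictindextowerlaw}, Proposition \ref{allfiniteindexgen}) and, crucially, the common-refinement theorem (Theorem \ref{gencommonrefinement}). The heart of the proof is showing that the single closed formula $\mu(g G_{U,\gamma}) = \mu_X(U)\, Y^\gamma$, extended by additivity, is independent of the chosen presentation of a ddd-set as a finite union of differences of distinguished sets; this is Theorem \ref{otmeasure}, proved by passing to a common reduced refinement of any two presentations. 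Your proposal never addresses this presentation-independence, which is exactly the step that the ``niceness'' hypotheses (rigid, properly suspended, compatible, of ordered type, together with the index identity $\mu_X(V) = |G_{V,\gamma}:G_{U,\gamma}|\,\mu_X(U)$) are there to secure.
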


Here ``nice" means that $G(R)$ is rigid, properly suspended, and of ordered type with respect to this level structure (all of these terms will be explained later in the text), and satisfies certain compatibility relations with $G(k)$.

The contents of this paper are as follows. We begin in Section \ref{sec:level} by reviewing the important definitions of level structure and level-compactness from \cite{waller-level}. In Section \ref{sec:orderedtype} we then give the definition of groups of ordered type, which will be the main object of study in this text. We also describe how the level structure behaves in this case with respect to the ring of ddd-sets, which will be the (minimal) ring on which our invariant measure will be defined.

In Section \ref{sec:orderedmeasure} we show that the results of \cite{waller-GL2} may be sufficiently generalised to the case of groups of ordered type. We then present some ``compatibility conditions" under which one may define a left-invariant, finitely additive measure on such a group $G$ taking values in some finitely generated field extension of $\mbb{R}$, such that this measure is an appropriate ``lift" of the Haar measure on the base $X$ over which the level structure is defined. In particular, we recover all of the special cases which have been studied previously.

Finally, in Section \ref{sec:induced} we use the induced level structure to obtain a much richer selection of examples than has previously been available. Most importantly, this includes the case of reductive groups over higher dimensional local fields. We highlight this example in particular due to the importance of such groups in representation theory. (Previously, only the particular case of $\GL_n(F)$ had been studied in any detail.)

\textbf{Notation.} If $F$ is an $n$-dimensional local field, we will denote by $\oo_F$ the rank one ring of integers of $F$, and by $O_F$ the rank $n$ ring of integers. We label the successive residue fields so that $F_{k-1} = \overline{F}_k$, so that (for example) $\overline{F} = F_{n-1}$, and $F_1$ is a local field. We also fix a system of local parameters $t_1, \dots, t_n$ for $F$ so that (for instance) $t_n$ is a uniformiser of the ring $\oo_F$.

For a nonnegative integer $e$ we always order the group $\mbb{Z}^e$ lexicographically from the right, where we take $\mbb{Z}^0 = \{ 0 \}$ by convention.

\textbf{Acknowledgements.} I would like to thank Ivan Fesenko for his many comments and suggestions throughout the writing of this text. I would also like to thank Kyu-Hwan Lee, Sergey Oblezin, and Tom Oliver, with whom I have discussed several aspects of this work.

\section{Level structures and level-compactness} \label{sec:level}
We recall from \cite{waller-level} the required definitions of level structure and level-compactness.

\begin{definition} \label{levelstructuredef}
Let $X$ be a locally compact topological group, and let $e \ge 0$ be an integer. A group $G$ is levelled over $X$ (with elevation $e$) if there is a collection $\mc{L}$ of subsets of $G$ satisfying the following conditions:

(1) Each element of $\mc{L}$ contains the identity element $e_G$ of $G$.

(2) $\mc{L}$ indexed by $\mc{U}(1) \times \mbb{Z}^e$, where $\mc{U}(1)$ is a basis of neighbourhoods of the identity in $X$ and $\mbb{Z}^e$ is lexicographically ordered from the right.

(3) For any $U, V \in \mc{U}(1)$ with $V \subset U$, if $G_{V,\gamma}, G_{U,\delta} \in \mc{L}$ with $\gamma \le \delta$ then $G_{V,\gamma} \cap G_{U,\delta} = G_{V,\delta}$.

(4) For any fixed $\gamma \in \mbb{Z}^e$, $G_{U,\gamma} \cup G_{V,\gamma} = G_{U \cup V, \gamma}$ and $G_{U,\gamma} \cap G_{V,\gamma} = G_{U \cap V, \gamma}$.

The collection $\mc{L}$ is called a level structure.
\end{definition}

\begin{examples}
(1) Any locally compact group $G$ is levelled over itself with elevation $0$. In this case $\mc{L} = \mc{U}(1)$.

(2) Let $F$ be an $n$-dimensional local field (which may be archimedean) and $F_1$ is its $(n-1)^{st}$ residue field. Then $F$ is levelled over $F_1$ with elevation $n-1$. If $F$ and $F_1$ have the same characteristic, so that $F$ is isomorphic to $F_1 (\!(t_2)\!) \dots (\!(t_n)\!)$, $\mc{L}$ consists of sets of the form $$t_2^{i_2} \dots t_n^{i_n}B(0,r) + \sum_{j=2}^n t_j^{i_j +1} t_{j+1}^{i_{j+1}} \dots t_n^{i_n} F_1 (\!(t_2)\!) \dots (\!(t_{j-1})\!)[\![ t_j]\!],$$ where $B(0,r)$ is the open ball of radius $r$ in $F_1$. In the mixed characteristic case, we associate to the pair $( t_1^{i_1} \oo_{F_1} , (i_2, \dots, i_n) )  \in \mc{U}(1) \times \mbb{Z}^{n-1}$ the set $t_1^{i_1} \dots t_n^{i_n} O_F \subset F$.

(3) Since the form taken by elements of $\mc{L}$ in the previous example may look quite complicated, we give a concrete example in dimension 3 to illustrate the general phenomenon. Let $F = \mbb{F}_p (\!(t_1)\!) (\!(t_2)\!) (\!(t_3)\!) $, so that $F_1 = \mbb{F}_p (\!(t_1)\!)$. The open balls in $F_1$ are then simply the fractional ideals $t_1^{i_1} \mbb{F}_p [\![t_1]\!]$ for $i_1 \in \mbb{Z}$. Elements of $\mc{L}$ are thus of the form $$t_1^{i_1} t_2^{i_2}t_3^{i_3} \mbb{F}_p [\![t_1]\!] + t_2^{i_2 +1} t_3^{i_3} \mbb{F}_p (\!(t_1)\!) [\![ t_2 ]\!] + t_3^{i_3+1} \mbb{F}_p (\!(t_1)\!) (\!( t_2 )\!) [\![ t_3]\!].$$ Note that this is exactly the set $t_1^{i_1}t_2^{i_2}t_3^{i_3} O_F$.

(4) For $F$ a two-dimensional nonarchimedean local field, the groups $$K_{i,j} = I_2 + t_1^i t_2^j M_2(O_F)$$ for $i,j > 0$ define a partial level structure on the group $\GL_2(F)$ over $\GL_2 (\overline{F})$. This particular case is discussed in much more detail in \cite{waller-GL2}.
\end{examples}

\begin{remark}
It was noted in \cite{waller-level} that the definition of a level structure does not necessarily require that the base $X$ be locally compact, and in fact this condition may be replaced with something much weaker. However, since the goal of the current text is to define an invariant measure, here it is extremely important that we work over a locally compact base.
\end{remark}

\begin{definition}
We equip $G$ with the level topology as follows. We take $\mc{L}$ as a basis of neighbourhoods of the identity, and then extend to other points of $G$ by insisting that multiplication by any fixed element be continuous.
\end{definition}

If $G$, is a locally compact group viewed as being levelled over itself with $e=0$, this is just the original topology on $G$. On the other hand, if $G$ is (the additive group of) a two-dimensional local field as in the second example, the level topology on $G$ is \textit{not} the usual two-dimensional topology as defined (for example) in \cite{madunts-zhukov} - in this topology elements of $\mc{L}$ are closed but not open, for example. 

\begin{definition}
An element of $G \mc{L} = \{ g H : g \in G, H \in \mc{L} \}$ is called a distinguished set. We also allow the empty set to be distinguished.
\end{definition}

\begin{example}
The distinguished subsets of a two-dimensional local field $F$ as defined by Fesenko in \cite{fesenko-aoas1} are exactly the distinguished sets of the elevation $1$ level structure of $F$ over its residue field, namely those of the form $\alpha + t_1^it_2^jO_F$. Similarly, the distinguished sets of $\GL_2(F)$ in \cite{waller-GL2} are precisely those of the form $gK_{i,j}$.
\end{example}

\begin{definition}
The ring of ddd-sets of $G$ with respect to the level structure $\mc{L}$ is the minimal ring of sets containing $G \mc{L}$.
\end{definition}

\begin{remark}
By construction, for a two-dimensional local field $F$, the ddd-sets of $F$ and $\GL_2(F)$ are exactly the ones defined in \cite{fesenko-aoas1} and \cite{waller-GL2}. As in these specific cases, the ring of ddd-sets provides a natural structure on which to define an invariant measure.
\end{remark}

\begin{definition} \label{leveldef}
For $G_{U,\gamma} \in \mc{L}$, we define its level $$\lv(G_{U,\gamma}) = \max \{ \delta \in \mbb{Z}^e : G_{U,\gamma} \subset G_{U,\delta}  \}.$$ We then put $\lv(gG_{U,\gamma}) = \lv(G_{U,\gamma})$ for any $g \in G$.  For a general subset $S \subset G$, the level of $S$ (if it exists) is the minimal level of any subset of $S$ of the form $g G_{U,\gamma}$ for $g \in G$, $U \in \mc{U}(1)$, $\gamma \in \mbb{Z}^e$. We write $\lv(S)$ for the level of $S$.
\end{definition}

Clearly we have the equality $$\lv(S) = \min \{ \lv(S') : S' \subset S \text{ has a level} \}.$$

Since the level of a subset is far from being an ``everywhere local" property, in order to consider compactness we require the following more refined notion of uniform level.

\begin{definition} \label{uniformleveldef}
A subset $S \subset G$ has uniform level $\gamma$ if $\lv(S) = \gamma$ and for every point $s \in S$ there is a distinguished set $D_s$ of level $\gamma$ with $s \in D_s$ and $D_s \subset S$.
\end{definition}

\begin{definition} \label{levelcompact}
Let $G$ be a group with level structure, and let $\gamma \in \mbb{Z}^e$. A subset $S \subset G$ is called $\gamma$-compact if every open cover (in the level topology) of $S$ by sets of uniform level $\gamma$ has a finite subcover. We will call $S$ level-compact if there is some $\gamma \in \mbb{Z}^e$ such that $S$ is $\gamma$-compact. More generally, $S$ is called locally level-compact if for every $s \in S$ there is some $\gamma \in \mbb{Z}^e$ such that $s$ has a $\gamma$-compact neighbourhood.
\end{definition}

\begin{remark}
As was hinted previously, it is important that each set in the cover has \textit{uniform} level $\gamma$. (See \cite{waller-level} for examples of what can go wrong if this is not the case.) Note that although we refer to open covers in the definition (so that the reader may immediately see the connection with compactness), we may in fact omit the word "open" since any set of uniform level is necessarily open.
\end{remark}

The standard example to keep in mind is the following.

\begin{proposition} \label{Fcompactness}
Let $F$ be a $d$-dimensional nonarchimedean local field with parameters $t_1, \dots, t_d$. If $F$ is given the level structure of elevation $d-1$ over it's $1$-dimensional residue field then the subset $t_1^{i_1} \cdots t_d^{i_d} O_F$ is $\gamma$-compact with $\gamma = (i_2, \dots, i_d)$.
\end{proposition}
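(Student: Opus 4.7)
The plan is to reduce the $\gamma$-compactness of $S := t_1^{i_1}\cdots t_d^{i_d}O_F$ to the ordinary compactness of $t_1^{i_1}\oo_{F_1}$ inside the locally compact $1$-dimensional local field $F_1$. First I would observe that $S$ is itself an element of $\mc{L}$: by direct comparison with the formulas in Examples (2) and (3), one has $S = G_{U,\gamma}$ where $U = t_1^{i_1}\oo_{F_1} \in \mc{U}(1)$. The crucial auxiliary object is the additive subgroup
$$H := \bigcap_{U' \in \mc{U}(1)} G_{U',\gamma}$$
of $F$, which in the equal characteristic case equals the ``$U$-independent'' piece $\sum_{j=2}^d t_j^{i_j+1}t_{j+1}^{i_{j+1}}\cdots t_d^{i_d} F_1(\!(t_2)\!)\cdots(\!(t_{j-1})\!)[\![t_j]\!]$ of the formula in Example (2), and is treated analogously in the mixed characteristic case. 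By construction, $G_{U',\gamma} \supset H$ for every $U' \in \mc{U}(1)$.

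The main step is to show that any set $T$ of uniform level $\gamma$ is $H$-invariant. For $t \in T$, uniformity provides a distinguished subset $t + G_{U_t,\gamma} \subset T$ of level $\gamma$, and since $G_{U_t,\gamma} \supset H$ this forces $t + H \subset T$. Thus any cover $\{T_\alpha\}$ of $S$ by sets of uniform level $\gamma$ descends, through the quotient map $\pi: F \to F/H$, to a cover of $\pi(S)$ by the subsets $\pi(T_\alpha)$. I would then identify $\pi(S)$ with $t_1^{i_1}\oo_{F_1}$ via the bijection $xt_2^{i_2}\cdots t_d^{i_d} + H \mapsto x$; this is indeed a bijection because every nonzero element of $H$ has $t_j$-valuation strictly greater than $i_j$ for some $j \ge 2$, while every element of $Ut_2^{i_2}\cdots t_d^{i_d}$ has $t_j$-valuation exactly $i_j$ for all $j \ge 2$.

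Under this identification, the distinguished subset $s + G_{U_s,\gamma}$ of $T_\alpha$ at a point $s = xt_2^{i_2}\cdots t_d^{i_d}$ descends to $(x + U_s) \cap t_1^{i_1}\oo_{F_1}$, which is an $F_1$-open neighborhood of $x$. Hence the induced cover of $t_1^{i_1}\oo_{F_1}$ by the sets $\pi(T_\alpha)$ is an open cover in the $F_1$-topology, and compactness of $t_1^{i_1}\oo_{F_1}$ inside the locally compact nonarchimedean field $F_1$ yields a finite subcover; lifting this through $\pi$ gives the required finite subcover of $\{T_\alpha\}$. The only obstacle is bookkeeping: handling the mixed and equal characteristic cases uniformly, verifying $H$-invariance of uniform-level sets, and translating the level topology through the quotient. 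Beyond the reduction modulo $H$, no deeper idea is needed.
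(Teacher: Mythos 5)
Your argument is correct. Note, however, that the paper itself gives no proof of this Proposition --- it simply cites \cite{waller-level} --- so there is nothing here to compare your route against directly. Your reduction is the natural one and it works: the subgroup $H=\bigcap_{U'}G_{U',\gamma}$ is contained in every $G_{U',\gamma}$, every distinguished set of level $\gamma$ is a coset $s+G_{U',\gamma}$ of a subgroup (so uniform-level-$\gamma$ sets are indeed unions of $H$-cosets), the quotient $G_{U,\gamma}/H$ is canonically $t_1^{i_1}\oo_{F_1}$ in both the equal and mixed characteristic cases, and $H$-saturation of the $T_\alpha$ is exactly what lets you pull the finite subcover back up from $F_1$. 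Two small points worth making explicit if you write this up: (i) you should justify that a distinguished set of level $\gamma$ really has the form $g+G_{U',\gamma}$ (i.e.\ that $\lv(G_{U',\delta})=\delta$ for this level structure), which follows from the containment criterion $t^{\alpha}O_F\subset t^{\beta}O_F\iff\alpha\ge\beta$ in the lexicographic order from the right; and (ii) in passing from the cover $\{\pi(T_\alpha)\}$ to an honest open cover of $t_1^{i_1}\oo_{F_1}$ you should work with the interiors of $\pi(T_\alpha)\cap t_1^{i_1}\oo_{F_1}$, since $\pi(T_\alpha)$ itself need not be open; the uniform-level hypothesis guarantees these interiors still cover. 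Finally, the paper's remark asserts that completeness of $F$ enters the proof in an essential way; in your argument the only completeness used is that of $F_1$ (compactness of $\oo_{F_1}$), with the structure theory of $O_F$ doing the rest, so if you intend this as a replacement proof you may wish to flag where, if anywhere, your identification $G_{U,\gamma}/H\cong t_1^{i_1}\oo_{F_1}$ silently relies on $F$ being complete.
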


\begin{proof}
See \cite{waller-level}.
\end{proof}

\begin{remark}
It is essential that the elements of the cover all have the same level as $t_1^{i_1} \cdots t_d^{i_d} O_F$ in this Proposition. Indeed, $O_F = \bigcup \alpha + t_2O_F$ where $\alpha$ runs through an (infinite!) set of representatives for $O_F / t_2 O_F$, and since the union is disjoint there can be no finite subcover. It is equally important that the elements of the cover have uniform level, as we saw in an earlier example.
\end{remark}

\begin{remark}
The proof of Proposition \ref{Fcompactness} uses the fact that $F$ is complete in an essential way. As some of the consequences of completeness will be crucial later, it is worthwhile to ask if the completeness property (or perhaps a weaker alternative which still works for this proof) can be restated purely in terms of the level structure.
\end{remark}

In the general case, it will also be important to impose the following condition, in order to eliminate various pathological cases.

\begin{definition}
A group $G$ levelled over $X$ with elevation $e$ is rigid if it satisfies the following condition: for any $\gamma \in \mbb{Z}^e$, if $G$ contains at least one subset of level $\gamma$ then $\lv(G_{U,\gamma}) = \gamma$ for all $U \in \mc{U}(1)$.
\end{definition}

\section{Groups of ordered type} \label{sec:orderedtype}

In this section we restrict attention to a particular kind of level structure which exhibits the same desirable properties as higher dimensional nonarchimedean fields. While it is unfortunate that this class of spaces does not include the archimedean case, this restriction makes so many things simpler that it is well worth studying as a special case. 

We begin with the definition, which one may recognise from the statement of Lemma 3.3 in \cite{waller-GL2}. Given how powerful this result was in that paper, it is not surprising that we would want to consider how it can be applied in the more general setting.

\begin{definition}
A group $G$ levelled over $X$ is of ordered type if $D_1 \cap D_2 \in \{ D_1, D_2, \emptyset \}$ for any two distinguished subsets $D_1, D_2$ of $G$.
\end{definition}

\begin{remark}
The terminology comes from the following observation: $G$ is of ordered type if and only if every family of pairwise nondisjoint distinguished subsets of $G$ is totally ordered with respect to inclusion.
\end{remark}

\begin{example}
Nonarchimedean higher local fields $F$ are of ordered type (see \cite{fesenko-aoas1}), as are the groups $\GL_2(F)$ (see \cite{waller-GL2}). Archimedean fields are not: $(0,2) + t \mbb{R}[\![ t]\!]$ and $(1,3) + t \mbb{R}[\![ t]\!]$ are distinguished subsets of $\mbb{R}(\!(t)\!)$ with nonempty intersection but neither is contained inside the other.
\end{example}

In \cite{waller-level}, we saw that the level operation behaves reasonably well under intersections, but noted that it is very badly behaved with respect to union. For groups of ordered type, however, the level is much more controllable.

\begin{proposition} \label{orderedunionlevel}
Let $G$ levelled over $X$ be of ordered type, and let $A,B$ be a distinguished subsets of $G$ with levels $\gamma, \delta$ respectively. If $A \cup B$ is not a distinguished set then it has level $\min \{ \gamma, \delta \}$.
\end{proposition}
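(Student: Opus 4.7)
My plan is to reduce the problem to the single observation that any distinguished subset of $A \cup B$ must itself lie entirely in $A$ or in $B$, after which both of the inequalities that together give $\lv(A \cup B) = \min\{\gamma, \delta\}$ follow directly from Definition \ref{leveldef}.

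First I would argue that $A$ and $B$ are disjoint. By ordered type, $A \cap B \in \{A, B, \emptyset\}$, and if $A \cap B \in \{A, B\}$ then one of $A, B$ is contained in the other, so $A \cup B$ would itself equal whichever is larger and hence be distinguished — contradicting the hypothesis. The upper bound $\lv(A \cup B) \le \min\{\gamma, \delta\}$ is then immediate: both $A$ and $B$ are distinguished subsets of $A \cup B$ with levels $\gamma$ and $\delta$, so the minimum defining $\lv(A \cup B)$ is at most $\min\{\gamma, \delta\}$.

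For the reverse inequality, I would take any nonempty distinguished $S' \subseteq A \cup B$ and apply ordered type twice, obtaining $S' \cap A \in \{S', A, \emptyset\}$ and $S' \cap B \in \{S', B, \emptyset\}$. Since $S' = (S' \cap A) \cup (S' \cap B)$ is nonempty, a short case analysis eliminates every combination except $S' \subseteq A$ or $S' \subseteq B$: the combination $S' \cap A = S' = S' \cap B$ forces $S' \subseteq A \cap B = \emptyset$; the combination $S' \cap A = A$ and $S' \cap B = B$ forces $S' = A \cup B$, so $A \cup B$ would be distinguished; and each mixed case (say $S' \cap A = S'$ with $S' \cap B = B$) forces $B \subseteq S' \subseteq A$, again making $A \cup B = A$ distinguished. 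Once $S' \subseteq A$ or $S' \subseteq B$ has been established, Definition \ref{leveldef} applied to $A$ (respectively $B$) gives $\lv(S') \ge \gamma$ (respectively $\lv(S') \ge \delta$), hence $\lv(S') \ge \min\{\gamma, \delta\}$. Taking the minimum over all such $S'$ yields $\lv(A \cup B) \ge \min\{\gamma, \delta\}$.

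The main conceptual step is the case analysis that rules out a distinguished set straddling both components; this is precisely where the assumption that $A \cup B$ is not distinguished is used, and I expect it to be the only part requiring any thought. Everything else is formal bookkeeping with the definition of level, so no further obstacle is anticipated.
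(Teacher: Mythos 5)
Your proof is correct and follows essentially the same route as the paper: the key step in both is to apply the ordered-type condition to $S' \cap A$ and $S' \cap B$, decompose $S' = (S' \cap A) \cup (S' \cap B)$, and use the hypothesis that $A \cup B$ is not distinguished to rule out the straddling cases. The only difference is organizational — the paper argues by contradiction with a hypothetical distinguished set of level below $\min\{\gamma,\delta\}$, whereas you prove the slightly sharper statement that \emph{every} distinguished subset of $A \cup B$ lies entirely in $A$ or in $B$ — but the mechanism is identical.
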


\begin{proof}
Let $\beta = \min \{ \gamma, \delta \}$. Then certainly $A \cup B$ contains a distinguished set of level $\beta$, and so if it has a level then the level must be at most $\beta$. Suppose that $A \cup B$ contains a distinguished set $D$ of level $< \beta$.

Now, since they are distinguished, $D \cap A$ is either empty or equal to $A$ (since $A$ has the higher level), and similarly $D \cap B$ is either empty or equal to $B$. Thus $D = D \cap (A \cup B) = (D \cap A) \cup (D \cap B) \in \{ \emptyset, A, B, A\cup B \}$. But since $D$ is distinguished it cannot be empty, and since $D$ has lower level than both $A$ and $B$ it cannot be equal to either of them. Thus the only possibility is that $D = A \cup B$ is distinguished, which is not true by assumption.

We have thus shown that $A \cup B$ contains no distinguished set of level lower than $\beta$, but since it contains both $A$ and $B$ it must contain a distinguished set of level $\beta$, and so it follows that $A \cup B$ has level $\beta$.
\end{proof}

\begin{remark}
If we remove the assumption that $A \cup B$ not be distinguished then the level may decrease, and it is easy to construct examples to show that this decrease can be arbitrarily large. However, $A \cup B$ still has a level in this case, since it is distinguished, and so we see that any union of two distinguished sets has a level. By imposing certain extra conditions one may obtain a bound on the possible decrease in level.
\end{remark}

\begin{corollary}
If $G$ is of ordered type and $A_1, \dots, A_r$ are finitely many distinguished subsets of $G$ such that no union of any collection of the $A_i$ is a distinguished set, then $A = \bigcup_{i=1}^r A_i$ has level $\min \{ \lv(A_i) \}$.
\end{corollary}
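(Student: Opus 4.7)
The plan is to imitate the proof of Proposition \ref{orderedunionlevel} directly rather than reduce to it by induction, since an inductive step would try to apply the proposition to $A_1 \cup \cdots \cup A_{r-1}$, which is typically no longer distinguished and so falls outside the scope of that result.

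First I would set $\beta = \min\{\lv(A_i) : 1 \le i \le r\}$. The inequality $\lv(A) \le \beta$ is immediate: whichever $A_i$ attains the minimum is a distinguished subset of $A$ of level $\beta$. For the reverse inequality I would argue by contradiction, assuming there is a distinguished subset $D \subset A$ with $\lv(D) < \beta$.

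The key step is then to apply the ordered type property to each pair $(D, A_i)$, which gives $D \cap A_i \in \{\emptyset, D, A_i\}$, and to rule out the middle case: $D \cap A_i = D$ would place $D$ inside $A_i$ as a distinguished subset of level strictly below $\lv(A_i) \ge \beta$, contradicting the definition of $\lv(A_i)$ as the minimum level of any distinguished subset of $A_i$. Hence $D \cap A_i \in \{\emptyset, A_i\}$ for every $i$, and decomposing $D = D \cap A = \bigcup_i (D \cap A_i)$ exhibits $D$ as $\bigcup_{i \in I} A_i$ for some nonempty $I \subseteq \{1, \dots, r\}$. If $|I| \ge 2$ this contradicts the combinatorial hypothesis on unions of the $A_i$; if $|I| = 1$ then $D = A_i$ for a single $i$, giving $\lv(D) = \lv(A_i) \ge \beta$ and contradicting $\lv(D) < \beta$.

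The main subtlety, and the only step that really needs to be verified carefully, is the reading of the definition of $\lv$ on distinguished sets: one has to confirm that $\lv(A_i)$, defined as the minimal level of a distinguished subset of $A_i$, genuinely forbids a distinguished $D \subset A_i$ of lower level. Once this is in hand, the argument reduces to a bookkeeping exercise combining the ordered type dichotomy with the hypothesis on unions, and I do not anticipate any substantial obstacle beyond this.
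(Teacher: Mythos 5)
Your proposal is correct and follows essentially the same route as the paper: a direct imitation of the proof of Proposition \ref{orderedunionlevel}, using the ordered type dichotomy to force $D \cap A_i \in \{\emptyset, A_i\}$ and then deriving a contradiction from the hypothesis on unions (the paper likewise does not attempt an induction, for exactly the reason you give). The one subtlety you flag --- that a distinguished subset of $A_i$ cannot have level below $\lv(A_i)$ --- is indeed the point the paper leans on implicitly via the displayed identity $\lv(S) = \min\{\lv(S') : S' \subset S \text{ has a level}\}$ following Definition \ref{leveldef}, and your explicit treatment of the case $|I|=1$ matches the paper's parenthetical exclusion of $D$ equalling a single $A_i$.
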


\begin{proof}
Let $\beta = \min \{ \lv(A_i) \}$ and suppose that $A$ contains a distinguished set $D$ of level $< \beta$. Then as in the proof of the Proposition we must have $D \cap A_i \in \{ \emptyset, A_i \}$ for every $i$, hence $D=D \cap A$ is either empty or equal to a union of some number $\ge 2$ of $A_i$, which is a contradiction since neither of these are distinguished. We then conclude as before that $\lv A = \beta$.
\end{proof}

\begin{corollary} \label{orderedunionlevel2}
If $G$ is of ordered type and $A_1, \dots, A_r$ are finitely many distinguished sets then $A = \bigcup_{i=1}^r A_i$ has a level.
\end{corollary}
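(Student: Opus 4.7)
The plan is to reduce to the previous corollary by iteratively absorbing ``bad'' sub-collections into single distinguished sets. The preceding corollary already produces a level (with an explicit formula) whenever no union of two or more of the given sets is distinguished, so the only remaining obstruction to invoking it is the presence of sub-collections whose union happens to be distinguished.

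Concretely, starting from the list $A_1, \dots, A_r$, I would look for any sub-collection of size at least two whose union $D$ is distinguished, and replace that sub-collection by the single set $D$. The overall union $A$ is unchanged, but the number of sets in the list has strictly decreased, so the procedure terminates after at most $r-1$ iterations. At termination, one obtains a list $B_1, \dots, B_s$ of distinguished subsets of $G$ with $\bigcup_{j=1}^s B_j = A$ and with the property that no union of two or more of the $B_j$ is itself distinguished.

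Two cases then arise. If $s = 1$, then $A = B_1$ is distinguished, and hence has a level by Definition \ref{leveldef}. Otherwise $s \ge 2$, and the hypotheses of the preceding corollary are satisfied, so $\lv(A) = \min \{ \lv(B_j) \}$, which in particular exists. The only part that requires any care is justifying that the combining procedure is well-defined and must halt; this is essentially bookkeeping, since each combining step strictly reduces the cardinality of the current list and the stopping condition is by construction exactly the hypothesis of the preceding corollary. I do not expect any serious obstacle beyond noting that the ordered-type assumption is inherited at each stage, which is automatic since the collection of distinguished subsets of $G$ is closed under the constructions involved.
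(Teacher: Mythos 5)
Your proposal is correct and follows essentially the same argument as the paper: absorb any sub-collection whose union is distinguished into a single distinguished set, note that this terminates since the list strictly shrinks, and then invoke the preceding corollary (or observe that a single distinguished set has a level). The extra care you take over the $s=1$ case and the termination bookkeeping is fine but does not change the substance.
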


\begin{proof}
If no subunion of $A = \bigcup_{i=1}^r A_i$ is equal to a distinguished set then we use the previous Corollary. If there is a subunion $\bigcup_{j} A_{i_j}$ which is equal to a distinguished set $B$, we replace the subcollection $\{ A_{i_j} \}$ with the set $B$. Since there are only finitely many $A_i$ to begin with, after at most finitely many replacements we will be in the first case.
\end{proof}

Unfortunately, the notion of having uniform level is particularly badly behaved with respect to unions, even when $G$ is of ordered type. (Indeed, the trouble that unions can cause were the motivation for the definition.) In general it is not even true that the union of two sets of the same uniform level $\gamma$ has uniform level.

\begin{example}
Let $F = \mbb{F}_p(\!(t_1)\!)(\!(t_2)\!)$, and $O_F = \mbb{F}_p[\![t_1]\!] + t_2 \mbb{F}_p(\!(t_1)\!) [\![ t_2]\!]$ be the rank two ring of integers of $F$. Put $A = t_2O_F \cup \left( t_2^{-1} + t_2O_F \right)$ and $B = O_F \backslash t_2O_F$. Both $A$ and $B$ have uniform level $1$, but $A \cup B = O_F \cup \left( t_2^{-1} + t_2O_F \right)$ does not have uniform level. (It has level $0$ since it contains $O_F$, but the point $t^{-1}$ has no distinguished neighbourhood of level $0$.)
\end{example}

Let us now consider what happens with differences.

\begin{proposition} \label{difflevel}
Let $G$ be Hausdorff and of ordered type. Then any distinguished subset of $G$ is closed, and if $A$ and $B$ are distinguished subsets of $G$ with $B \subset A$ such that $\lv(B) > \lv(A)$ and $A \backslash B$ has a level then $\lv(A \backslash B) \ge \lv(A).$
\end{proposition}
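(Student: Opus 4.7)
The plan is to split the proposition into two logically independent pieces: (i) closedness of every distinguished set in the level topology, which will need both the Hausdorff and the ordered-type hypotheses, and (ii) the level inequality for $A \setminus B$, which is essentially formal. I expect (i) to carry almost all of the content, while (ii) will fall out immediately from the definition of $\lv$.

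For (i), I fix a distinguished set $D = hK$ with $K \in \mc{L}$ and an arbitrary $g \notin D$, and aim to produce some $H \in \mc{L}$ with $gH \cap D = \emptyset$. Since left translation is a homeomorphism by the very definition of the level topology, such a $gH$ then contains an open neighbourhood of $g$ disjoint from $D$, so the complement of $D$ is open. Applying the ordered-type hypothesis to the two distinguished sets $gH$ and $D$ restricts $gH \cap D$ to one of $\emptyset$, $gH$, or $D$. The middle alternative would force $g \in D$, a contradiction, so if no $H$ succeeds we must have $D \subset gH$ for every $H \in \mc{L}$. I then intersect over all $H$: since $\mc{L}$ is a basis of neighbourhoods of the identity in the Hausdorff level topology, $\bigcap_{H \in \mc{L}} H = \{ e_G \}$, so $D \subset g \cdot \{e_G\} = \{g\}$. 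Combined with $D \neq \emptyset$ and $g \notin D$, this is the desired contradiction.

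For (ii), the inequality is essentially tautological: any distinguished subset $S'$ of $A \setminus B$ is in particular a distinguished subset of $A$, so $\lv(S') \ge \lv(A)$ by the definition of $\lv(A)$ as the minimum over distinguished subsets of $A$; taking the minimum over all such $S'$ yields $\lv(A \setminus B) \ge \lv(A)$. The main obstacle in the whole argument, such as it is, lies in (i), specifically in justifying $\bigcap_{H \in \mc{L}} H = \{e_G\}$ from the Hausdorff hypothesis — this is the standard separation argument but is worth stating explicitly since the level topology may not a priori make $G$ into a topological group in the usual sense. The extra hypotheses $B \subset A$ and $\lv(B) > \lv(A)$, as well as the ordered-type assumption for part (ii), do not appear to be needed for the inequality and are presumably recorded for compatibility with later applications.
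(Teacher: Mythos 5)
Your proposal is correct, but the closedness argument takes a genuinely different route from the paper's. The paper fixes $x \notin D$ and a point $b \in D$, uses the Hausdorff property to separate them by disjoint open sets, shrinks these to distinguished sets (using that every open set in the level topology is a union of distinguished sets), and then invokes ordered type: since $x \in U_x \setminus D$ and $b \in D \setminus U_x$, the intersection $U_x \cap D$ can only be empty. You instead run the ordered-type trichotomy against \emph{every} basic neighbourhood $gH$, $H \in \mc{L}$, and rule out the bad branch globally via the $T_1$ consequence $\bigcap_{H \in \mc{L}} H = \{e_G\}$; this is a clean and valid alternative, and it correctly isolates exactly what is needed (that $\mc{L}$ is a neighbourhood basis at the identity and that translation transports it). The trade-off is that the paper's separation-by-distinguished-sets device does double duty: it is used not only for closedness but also to exhibit an explicit distinguished subset $U_x \subset A \setminus B$ (built from the translate $B_x = xb^{-1}B$, which is shown to lie in $A$ using $\lv(B) > \lv(A)$) --- information that goes beyond the literal statement, since, as you correctly observe, once ``$A \setminus B$ has a level'' is assumed the inequality $\lv(A\setminus B) \ge \lv(A)$ is immediate from the definition of level as a minimum over distinguished subsets, exactly as in the paper's opening sentence. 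Your diagnosis that the hypotheses $B \subset A$ and $\lv(B) > \lv(A)$ are not needed for the stated inequality is therefore accurate; they are consumed by the paper's stronger intermediate construction.
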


\begin{proof}
Clearly $A \backslash B$ cannot contain any distinguished set of level $< \lv(A)$, and so it is enough to show that it contains a distinguished set of some level.

Let $b \in B$, $x \in A \backslash B$. Then $B_x = xb^{-1} B$ is a distinguished subset of $G$, and since it contains $x=xb^{-1}b$ we have $B_x \cap A \neq \emptyset$. We thus have $B_x \cap A \in \{B_x, A \}$, and since $\lv(B_x) = \lv(B) > \lv(A)$, from \cite{waller-level} we have $\lv(B_x \cap A) \ge \lv(B_x) > \lv(A)$, hence we must have $B_x \cap A = B_x$, i.e. $B_x \subset A$.

Now, since $G$ is Hausdorff we can find disjoint open subsets $U_x, U_b \subset G$ with $x \in U_x$ and $b \in U_b$. Since $B_x$ and $B$ are both open sets we may assume that $U_x \subset B_x$ and $U_b \subset B$. Furthermore, since open sets are unions of distinguished sets, we may assume that $U_x$ and $U_b$ are in fact distinguished sets. But $x \in U_x \backslash B$ and $b \in B \backslash U_x$ implies $U_x \cap B = \emptyset$, hence $U_x$ is a distinguished subset of $A \backslash B$.

It remains to show that distinguished sets are closed. Let $D$ be any distinguished subset of $G$. By the same argument as in the previous paragraph, for any $x \in G \backslash D$ we can find a distinguished set $U_x$ which contains $x$ and is disjoint from $D$. But then $G \backslash B = \bigcup_{x \in G \backslash B} U_x$ is an open set, hence $D$ is closed.
\end{proof}

The following Proposition is not important for the remainder of the current text, but we present it here as an example of the kind of conditions one can impose in order to get a bound on the level. In particular, this result holds for higher dimensional local fields (although in that case one can easily compute the level directly).

\begin{proposition} \label{subgroupdistlevel}
Let $G$ be Hausdorff and of ordered type, and suppose that each element of $\mc{L}$ is a subgroup of $G$. Let $A$ be a distinguished subset of $G$ of level $\gamma$ and let $B \subset A$ be a distinguished subset of $G$ of level $\delta > \gamma$. If $A \backslash B$ has a level then $\gamma \le \lv(A \backslash B) \le \delta$.
\end{proposition}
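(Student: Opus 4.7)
My plan is to handle the two inequalities separately, with the lower bound coming essentially for free from the previous result and the subgroup hypothesis doing all the work for the upper bound.

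For the lower bound $\gamma \le \lv(A \setminus B)$, the hypotheses of Proposition \ref{difflevel} are satisfied verbatim ($G$ is Hausdorff and of ordered type, $B \subset A$ with $\lv(B) > \lv(A)$, and $A \setminus B$ has a level by assumption), so that proposition gives $\lv(A \setminus B) \ge \lv(A) = \gamma$ directly.

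For the upper bound $\lv(A \setminus B) \le \delta$, I need to exhibit a distinguished subset of $A \setminus B$ whose level is at most $\delta$. The natural candidate is a translate of $B$ by some point of $A \setminus B$, and the subgroup assumption is exactly what makes this work. Write $A = aH'$ and $B = bH$ with $H, H' \in \mc{L}$ of levels $\gamma, \delta$ respectively. Since $b \in B \subset A = aH'$, we have $a^{-1}b \in H'$, and since $H'$ is a subgroup it follows easily (using that $a^{-1}B = (a^{-1}b)H \subset H'$ and that $a^{-1}b$ is invertible inside $H'$) that $H \subset H'$. Because the level of a distinguished set and the set-theoretic difference operation are both left-translation invariant, after replacing $A, B$ by $(a^{-1}b)^{-1}a^{-1}A$ and $(a^{-1}b)^{-1}a^{-1}B$ we may assume outright that $A = H'$ and $B = H$, with $H \subsetneq H'$ subgroups in $\mc{L}$ (strict inclusion because their levels differ).

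The upper bound is now immediate: since $H \subsetneq H'$, pick any $x \in H' \setminus H$ and consider $xH$. This is a distinguished set of level $\delta$ by definition. It is contained in $H' = A$ because $x \in H'$ and $H \subset H'$ make $xH \subset H'$, and it is disjoint from $H = B$ because any equality $xh = h'$ with $h, h' \in H$ would force $x = h'h^{-1} \in H$, contradicting the choice of $x$. Thus $xH$ is a distinguished subset of $A \setminus B$ of level exactly $\delta$, so $\lv(A \setminus B) \le \delta$.

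The only even mildly delicate step is the WLOG reduction, where one must verify that the subgroup property of elements of $\mc{L}$ both (a) forces $H \subset H'$ from $B \subset A$ and (b) allows the translation that puts $b$ at the identity without destroying $A$; both of these, however, reduce to standard coset arithmetic. No obstacle is expected beyond correctly bookkeeping the translations.
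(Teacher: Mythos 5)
Your proposal is correct and follows essentially the same route as the paper: the lower bound is quoted from Proposition \ref{difflevel}, and the upper bound is obtained by producing a second coset of the subgroup underlying $B$ inside $A$, disjoint from $B$ and of level $\delta$. The only cosmetic difference is your WLOG translation normalising $A$ and $B$ to subgroups at the identity, where the paper works directly with the translates $gG_{U,\gamma}$ and $hG_{V,\delta}$ and picks another coset $kG_{V,\delta}$.
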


\begin{proof}
The first inequality was shown in the previous Proposition, and so it remains to show that $A \backslash B$ contains a distinguished set of level $\delta$. Write $A=g G_{U,\gamma}$, $B = h G_{V,\delta}$ with $g,h \in G$ and $G_{U,\gamma}, G_{V,\delta} \in \mc{L}$. Then $g^{-1}h G_{V,\delta}$ is a coset of $G_{V,\delta}$ in $G_{U,\gamma}$. Since we assumed that $B$ has strictly larger level than $A$, $A \backslash B$ is nonempty, hence there exists at least one other coset $k G_{V,\delta}$. Thus $gk G_{V,\delta} \subset A$ and $B \cap gk G_{V,\delta} = g(g^{-1}h G_{V,\delta} \cap k G_{V,\delta}) = \emptyset$, hence $A \backslash B$ contains the distinguished set $gkG_{V,\delta}$ of level $\delta$.
\end{proof}

We would now like to combine some of our results in order to get something for ddd-sets. To do this, we must first recall the following classification of subsets of $G$ with no level.

\begin{definition}
Let $A$ be a subset of $G$ such that $\lv(S)$ does not exist. If $A$ contains no distinguished set, then $A$ is of type $S$. If the set $\{ \lv(D) : D \subset A \text{ is distinguished} \}$ is not bounded below (in other words if $A$ contains distinguished sets of arbitrarily low level), then $A$ is of type $L$. Otherwise, $A$ is of type $E$.
\end{definition}

\begin{remark}
If the elevation $e \le 1$, there can be no subsets of type $E$.
\end{remark}

\begin{proposition}
Let $G$ be Hausdorff and of ordered type, and for $1 \le i \le r$ let $A_i, B_j$ be distinguished (or empty) subsets of $G$ with each $B_j \subset A_i$ for some $i$ and the sets $A_i \backslash \bigcup_j B_j$ pairwise disjoint. If no union of any collection of $A_i \backslash \bigcup_j B_j$ is a distinguished set and $C = \bigcup_i A_i \backslash \bigcup_j B_j$ is not of type $E$ then $C$ has level at least $\min \{ \lv (A_i) \}$.
\end{proposition}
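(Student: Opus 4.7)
The plan is to argue by contradiction: suppose $C$ has a distinguished subset $D$ with $\lv(D) < \alpha$, where $\alpha := \min_i \lv(A_i)$, and then show that $D$ itself must be one of the forbidden unions. The level control comes almost for free from the ordered-type hypothesis, so the whole argument reduces to chasing intersections.

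First I would analyze $D \cap A_i$ for each $i$. By ordered type this intersection lies in $\{\emptyset, D, A_i\}$. The case $D \cap A_i = D$ is excluded immediately: it would mean $D \subset A_i$, forcing $\lv(A_i) \le \lv(D) < \alpha \le \lv(A_i)$. Consequently, for each $i$ either $A_i \subset D$ or $A_i \cap D = \emptyset$, and since $D \subset C \subset \bigcup_i A_i$ we obtain $D = \bigcup_{i \in I} A_i$ for $I = \{i : A_i \subset D\}$.

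Next I would examine $D \cap B_j$. Again by ordered type the intersection lies in $\{\emptyset, D, B_j\}$, but $D \subset C$ is disjoint from every $B_j$ by the very definition of $C$, so $D \cap B_j = \emptyset$. In particular $A_i \cap B_j = \emptyset$ for every $i \in I$ and every $j$, so $A_i \setminus \bigcup_j B_j = A_i$ for each $i \in I$, and therefore $D = \bigcup_{i \in I}(A_i \setminus \bigcup_j B_j)$. This writes $D$ as a union of a collection of the sets $A_i \setminus \bigcup_j B_j$, contradicting the standing hypothesis that no such union is distinguished. Hence every distinguished subset of $C$ has level at least $\alpha$.

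To turn this into the asserted inequality I still have to argue that $\lv(C)$ actually exists. The contradiction just derived rules out type $L$. Adapting the Hausdorff/finite-intersection argument from Proposition \ref{difflevel} to the finitely many $B_j$ contained in a fixed $A_i$ (using ordered type to see that a finite intersection of distinguished sets sharing a point is itself distinguished), one produces a distinguished subset of $C$ inside each nonempty $A_i \setminus \bigcup_j B_j$, so type $S$ is ruled out as well. Combined with the assumption that $C$ is not of type $E$, the minimum in the definition of $\lv(C)$ must be attained, and the previous paragraph forces $\lv(C) \ge \alpha$. The main obstacle I anticipate is precisely this last step: since the lexicographic order on $\mbb{Z}^e$ is not a well-order, a bounded-below set of levels need not realise its infimum, so the "not of type $E$" hypothesis has to be wielded carefully alongside the production of distinguished subsets inside $C$.
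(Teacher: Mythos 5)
Your argument is correct and follows essentially the same route as the paper's proof: assume a distinguished $D \subset C$ of level below $\min_i \lv(A_i)$, use the ordered-type trichotomy to force $D \cap A_i \in \{A_i, \emptyset\}$, and conclude that $D$ would be a union of the pieces $A_i \setminus \bigcup_j B_j$, contradicting the hypothesis. Your closing paragraph is in fact slightly more careful than the paper, which dispatches the existence of $\lv(C)$ with the single assertion that ``by construction $C$ contains a distinguished set of level $\delta = \min\{\lv(A_i \setminus \bigcup_j B_j)\}$'' rather than spelling out how types $S$ and $L$ are excluded.
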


\begin{proof}
Let $\gamma = \min \{ \lv(A_i) \}$, $\delta = \min \{ \lv(A_i \backslash \bigcup_j B_j ) \}$. By construction $C$ contains a distinguished set of level $\delta$, and so all we must show is that it contains no distinguished subset of level lower than $\gamma$.

Suppose $C$ contains a distinguished set $D$ of level $< \gamma$. First of all this means that $\bigcup_i A_i$ contains $D$. Since they are distinguished we have $A_i \cap D \in \{ A_i, D, \emptyset \}$, and since $D$ has lower level than all of the $A_i$ we in fact have $A_i \cap D \in \{ A_i, \emptyset \}$, and so $(A_i \backslash \bigcup_j B_j) \cap D \in \{ (A_i \backslash \bigcup_j B_j), \emptyset \}$. But $D = D \cap C$ cannot be empty or equal to a single $A_i \backslash \bigcup_j B_j$, hence it is a union of two or more of the $A_i \backslash \bigcup_j B_j$, which violates our assumption that no subunion inside $C$ be distinguished.
\end{proof}

\begin{corollary}
If $G$ is Hausdorff and of ordered type then every ddd-set of $G$ either has a level or is of type $E$.
\end{corollary}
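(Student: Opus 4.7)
My plan is to argue directly via the ring structure of ddd-sets and the order-type hypothesis, bounding from below the levels of all distinguished subsets of a given ddd-set $C$.

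The first step is a canonical form. Since ordered type makes any intersection of two distinguished sets trivial (it is one of the two or empty), a standard induction on the ring-generating operations shows that every ddd-set admits a representation $C = \bigsqcup_{i=1}^{r}(A_i \setminus \bigcup_{j} B_{i,j})$ as a finite disjoint union, with $A_i, B_{i,j} \in G\mc{L}$ and each $B_{i,j} \subset A_i$.

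The main step is to bound the levels of distinguished subsets $D \subset C$ from below. By ordered type, $D \cap A_i \in \{D, A_i, \emptyset\}$ for each $i$, and I split into two cases. In case (a), $D \subset A_i$ for some $i$, and monotonicity of level under inclusion gives $\lv(D) \ge \lv(A_i) \ge \min_i \lv(A_i)$. In case (b), $D \cap A_i \in \{A_i, \emptyset\}$ for every $i$, and by intersecting $D$ against each piece (using pairwise disjointness and that the pieces cover $C$) one shows $D$ must coincide with a disjoint union $\bigsqcup_{i \in I} A_i$ for some $I \subset \{1, \ldots, r\}$. Since there are at most $2^r$ such candidates, case (b) contributes only finitely many distinguished subsets, and hence only finitely many levels.

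Combining the two cases, the set $\{\lv(D) : D \subset C \text{ distinguished}\}$ is bounded below, ruling out type $L$. When $C$ is nonempty, arguing as in Proposition \ref{difflevel} one shows that each nonempty piece contains a distinguished subset, so this level set is itself nonempty and $C$ is not of type $S$. Either its infimum is attained, in which case $C$ has a level, or it is not, in which case $C$ is of type $E$. I expect the main obstacle to be the case (b) analysis --- confirming that the pairwise disjointness of the pieces, together with $D \cap A_i \in \{A_i, \emptyset\}$ for all $i$, really does force $D$ to equal a disjoint union of whole $A_i$'s rather than some more intricate configuration within $C$.
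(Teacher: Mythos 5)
Your proof is correct and takes essentially the same route as the paper: the paper's own proof reduces, via the same canonical decomposition into disjoint differences, to the immediately preceding Proposition, whose argument is exactly your trichotomy $D \cap A_i \in \{D, A_i, \emptyset\}$, with your case (b) finiteness observation standing in for the paper's step of replacing any distinguished subunion by the corresponding distinguished set. The obstacle you flag does resolve cleanly: if $D \cap A_i \in \{A_i, \emptyset\}$ for every $i$ and $I = \{i : A_i \subset D\}$, then $D = D \cap C = \bigcup_{i \in I}\bigl(A_i \setminus \bigcup_j B_{i,j}\bigr) \subset \bigcup_{i \in I} A_i \subset D$, so $D$ really is one of at most $2^r$ unions of whole $A_i$'s, each distinguished and hence levelled; your explicit exclusion of type $S$ via the Hausdorff argument of Proposition \ref{difflevel} also makes precise a point the paper leaves implicit.
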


\begin{proof}
By definition any ddd-set can be written as a difference of finite disjoint unions $C = \bigcup_i A_i \backslash \bigcup_j B_j$ with each $B_j$ contained in some $A_i$. If any subunion $\bigcup_k A_{i_k} \backslash \bigcup_j B_{j}$ inside $C$ is equal to a distinguished set $D$, replace the collection $\bigcup_k A_{i_k} \backslash \bigcup_j B_{j}$ with the set $D$ (which has a level since it is distinguished). Since we begin with only finitely many sets, after finitely many such replacements we will be in the situation of the previous Proposition.
\end{proof}

The above result may still hold even if $G$ is not of ordered type. (It may be proven directly for an archimedean higher local field, for example.)

\section{An invariant measure on groups of ordered type} \label{sec:orderedmeasure}

In \cite{fesenko-aoas1} and \cite{waller-GL2}, one of the most crucial results that facilitated the construction of an invariant measure on $F$ and $\GL_2(F)$ for a two-dimensional local field $F$ was the property that the intersection of two distinguished sets is either empty or equal to one of them. Since this intersection property is the very definition of a group $G$ being of ordered type, we are naturally inclined to search for an invariant measure on such spaces in general.

The constructions of \cite{waller-GL2} were in fact written with this more general setting in mind, and so while we reformulate the important parts of this in the more general setting of groups of ordered type, the groups $F$ and $\GL_2(F)$ are good motivating examples to keep in mind throughout this section.

First of all, we must extend the notion of index of a subgroup to distinguished sets. The following definitions make sense for all groups with level structure, not just those of ordered type.

\begin{definition}
Let $D_1$ and $D_2$ be distinguished subsets of $G$ such that $D_1 \subset D_2$. We say that $D_1$ has finite index in $D_2$ if there are finitely many $g_i \in G$ such that $\bigcup_i g_i D_1$ is an open cover of $D_2$.
\end{definition}

\begin{definition}
Let $D_1$ and $D_2$ be distinguished subsets of $G$ and suppose that $D_1$ has finite index in $D_2$. Then the index $|D_2:D_1|$ of $D_1$ in $D_2$ is defined to be the minimum cardinality $$\min \left\{ \# I : \# I < \infty, D_2 \subset \bigcup_{i \in I} g_i D_1 \right\} $$ of all finite open covers of $D_2$ by translates of $D_1$.
\end{definition}

\begin{remark}
The set $\left\{ \# I : \# I < \infty, D_2 \subset \bigcup_{i \in I} g_i D_1 \right\}$ is a nonempty subset of $\mbb{Z}$ and is bounded below by $1$, and so this minimum always exists. If $D_1$ is a subgroup of $D_2$, this coincides with the usual definition of index, and it also coincides with the use of this terminology in \cite{waller-level} for the group $G=\GL_2(F)$ with $F$ a two-dimensional local field. 
\end{remark}

By abuse of notation we will write $|D_2 :D_1| < \infty$ if $D_1$ has finite index in $D_2$.

\begin{lemma} \label{indextowerlaw}
Let $D_1, D_2,$ and $D_3$ be distinguished subsets of $G$ with $D_1 \subset D_2 \subset D_3$. Then $|D_3 : D_1|$ is finite if and only if both $|D_3:D_2|$ and $|D_2 : D_1|$ are finite, in which case $|D_3 : D_1| \le |D_3:D_2| \cdot |D_2 : D_1|$.
\end{lemma}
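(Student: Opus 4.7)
The plan is to prove the biconditional and the inequality together by combining covers in the easy direction and exploiting the ordered type property of $G$ in the harder direction. For the ``if'' direction, I would start from minimal covers $D_2 \subseteq \bigcup_{i=1}^{n} g_i D_1$ with $n = |D_2:D_1|$ and $D_3 \subseteq \bigcup_{j=1}^{m} h_j D_2$ with $m = |D_3:D_2|$. Left-translating the first cover by each $h_j$ gives $h_j D_2 \subseteq \bigcup_i h_j g_i D_1$, and assembling these yields
$$
D_3 \subseteq \bigcup_j h_j D_2 \subseteq \bigcup_{i,j} h_j g_i D_1,
$$
which is a cover of $D_3$ by $mn$ translates of $D_1$. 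This immediately gives both $|D_3:D_1| < \infty$ and the asserted inequality.

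For the converse, assume $|D_3:D_1| = n < \infty$ and fix a minimal cover $D_3 \subseteq \bigcup_{i=1}^n g_i D_1$. Finiteness of $|D_3:D_2|$ is formal: since $D_1 \subseteq D_2$, each $g_i D_1 \subseteq g_i D_2$, and the same translates give $D_3 \subseteq \bigcup_i g_i D_2$, so $|D_3:D_2| \le n$. The substantive step is to show $|D_2:D_1| < \infty$, and here I would invoke the ordered type hypothesis. Since $D_2 \subseteq D_3 \subseteq \bigcup_i g_i D_1$, for each $i$ with $g_i D_1 \cap D_2 \neq \emptyset$ both $g_i D_1$ and $D_2$ are distinguished, so by ordered type either $g_i D_1 \subseteq D_2$ or $D_2 \subseteq g_i D_1$. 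If the second alternative holds for some $i$, then $D_2$ is covered by a single translate of $D_1$, so $|D_2:D_1| = 1$. Otherwise every relevant $g_i D_1$ sits inside $D_2$, and since they already cover $D_2$ we obtain
$$
D_2 = \bigcup_{i \,:\, g_i D_1 \cap D_2 \neq \emptyset} g_i D_1,
$$
exhibiting $D_2$ as a cover by at most $n$ translates of $D_1$, whence $|D_2:D_1| \le n$.

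The main (and really the only) conceptual obstacle is this last step: without the intersection dichotomy provided by ordered type, translates $g_i D_1$ covering $D_3$ could partially overlap $D_2$ without being contained in it, blocking the extraction of a cover of $D_2$ from one of $D_3$. I therefore expect ordered type to be used essentially and only at this point, which is consistent with the remark preceding the definition of index that the setup itself makes sense more generally.
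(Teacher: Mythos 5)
Your forward direction and your argument that $|D_3:D_2|$ is finite are exactly the paper's: compose the two covers to get $mn$ translates of $D_1$ covering $D_3$, and enlarge each $f_kD_1$ to $f_kD_2$ to cover $D_3$ by translates of $D_2$. The problem is the step you single out as ``the substantive step''. You invoke the ordered type hypothesis to show $|D_2:D_1|<\infty$, but Lemma \ref{indextowerlaw} does not assume $G$ is of ordered type --- that hypothesis is only added in Corollary \ref{strictindextowerlaw}, where it upgrades the inequality to an equality, and the definitions of index were introduced with the explicit remark that they make sense for all groups with level structure. As written, your argument proves a strictly weaker statement than the one claimed.

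The obstacle you describe is illusory, and the reason is the definition of index: $|D_2:D_1|$ is the minimal cardinality of a finite family of translates $g_iD_1$ whose union \emph{contains} $D_2$; there is no requirement that the translates be contained in $D_2$, nor that they be disjoint. Hence from $D_2 \subset D_3 \subset \bigcup_{k=1}^{\ell} f_k D_1$ you already have a finite cover of $D_2$ by translates of $D_1$, giving $|D_2:D_1| \le \ell$ with no case analysis and no appeal to the intersection dichotomy. This is precisely what the paper does. Your ordered-type argument is internally sound (each $g_iD_1$ meeting $D_2$ either contains it or lies inside it, and either branch bounds the index by $n$), so nothing in it is false; it is simply answering a question the definition never poses, at the cost of a hypothesis the lemma is not entitled to. Replace that paragraph with the one-line restriction of the cover and the proof is complete in the stated generality.
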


\begin{proof}
First suppose $|D_3:D_2|$ and $|D_2 : D_1|$ are finite, so that $D_3 \subset \bigcup_{i=1}^m g_i D_2$ and $D_2 \subset \bigcup_{j=1}^n h_j D_1$ for some $g_i, h_j \in G$. This gives $D_3 \subset \bigcup_{i=1}^m \bigcup_{j=1}^n g_i h_j D_1$, and so $D_1$ has finite index in $D_3$, and taking $m$ and $n$ to be minimal gives the required inequality of indices.

Conversely, suppose that $|D_3:D_1|$ is finite, so that $D_3 \subset \bigcup_{k=1}^\ell f_i D_1$. Since $D_2 \subset D_3$, $\bigcup_{k=1}^\ell f_i D_1$ is already a finite cover of $D_2$, and so it follows immediately that $|D_2:D_1|$ is finite. On the other hand, since $D_1 \subset D_2$ we have $\bigcup_{k=1}^\ell f_i D_2 \supset \bigcup_{k=1}^\ell f_i D_1 \supset D_3$, and so $|D_3 : D_2|$ is also finite.
\end{proof}

\begin{corollary}  \label{strictindextowerlaw}
Let $G$ be of ordered type and let $D_1 \subset D_2 \subset D_3$ be distinguished subsets of $G$. If $|D_3:D_1|$ is finite then so are both $|D_3:D_2|$ and $|D_2 : D_1|$, and $|D_3 : D_1| = |D_3:D_2| \cdot |D_2 : D_1|$
\end{corollary}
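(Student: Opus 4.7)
By Lemma \ref{indextowerlaw} the finiteness assertions and the inequality $|D_3:D_1|\le|D_3:D_2|\cdot|D_2:D_1|$ are already established, so only the reverse inequality remains, and this is where the ordered-type hypothesis is essential. Set $m=|D_3:D_2|$, $n=|D_2:D_1|$, $N=|D_3:D_1|$, and fix minimal covers $D_3\subset\bigcup_{i=1}^m g_iD_2$ and $D_3\subset\bigcup_{k=1}^N f_kD_1$.

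The first step is to upgrade each minimal cover to a disjoint partition of $D_3$. Two distinct translates in a minimal cover which met would, by ordered type, be nested, rendering the smaller one redundant; so distinct translates in a minimal cover are pairwise disjoint. Ordered type applied to a cover translate against $D_3$ itself then forces the translate either to lie inside $D_3$ or to contain $D_3$ entirely (the latter forcing the cover to have size $1$, a boundary case handled directly). Setting the corner aside, we obtain honest partitions $D_3=\bigsqcup_i g_iD_2=\bigsqcup_k f_kD_1$.

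With both partitions in place, ordered type applied to each pair $(f_kD_1,g_iD_2)$ gives an intersection in $\{f_kD_1,g_iD_2,\emptyset\}$, and disjointness of the $g_iD_2$ leaves only two scenarios: either $f_kD_1\subset g_{i(k)}D_2$ for a unique $i(k)$, or $f_kD_1$ is itself a disjoint union of two or more of the $g_iD_2$. The second scenario, translated back to the identity by $f_k^{-1}$, would display $D_1$ as containing two or more disjoint translates of $D_2$, forcing $D_2\subset g'D_1$ for some $g'$ and hence $|D_2:D_1|=1$; this $n=1$ corner reduces the desired inequality to $N\ge m$, which is immediate because any cover of $D_3$ by translates of $D_1\subset D_2$ is already a cover by translates of $D_2$. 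Outside this corner the map $k\mapsto i(k)$ is defined on all of $\{1,\dots,N\}$, partitioning it into $m$ fibres, each of which provides a cover of its $g_iD_2$ by translates of $D_1$ and must therefore contain at least $|g_iD_2:D_1|=|D_2:D_1|=n$ elements (indices being translation-invariant); summing gives $N\ge mn$.

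The principal obstacle is the matching step — really, the exclusion of translates of $D_1$ properly containing translates of $D_2$ — which is where the strict containment $D_1\subsetneq D_2$ interacts with the ordered-type nesting dichotomy to cut down the possibilities. Once that case analysis is in place, the equality is simply a bookkeeping exercise counting fibres of $k\mapsto i(k)$.
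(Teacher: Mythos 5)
Your proof is correct and follows essentially the same route as the paper: both arguments use the ordered-type hypothesis to upgrade minimal covers to disjoint partitions of $D_3$ by translates of $D_2$ and of $D_1$, so that indices multiply exactly as for cosets. The paper simply composes the two partitions into a disjoint partition of $D_3$ by $|D_3:D_2| \cdot |D_2:D_1|$ translates of $D_1$ and asserts the lower bound from there, whereas your fibre-counting of the map $k \mapsto i(k)$ (together with the explicit treatment of the corner cases) spells out why no cover by fewer translates of $D_1$ can exist.
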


\begin{proof}
Since $G$ is of ordered type we may in fact arrange that $D_3 = \bigcup_{i=1}^m g_i D_2$ is a disjoint union of translates of $D_2$ and $D_2 = \bigcup_{j=1}^n h_j D_1$ is a disjoint union of translates of $D_1$. This gives a disjoint union $D_3 = \bigcup_{i=1}^m \bigcup_{j=1}^n g_i h_j D_1$, which shows that at least $mn$ translates of $D_1$ are required to cover $D_3$.
\end{proof}

\begin{remark}
If $G$ is of ordered type, any two translates of the same distinguished set $D$ are either disjoint or equal, and so in this case distinguished subsets behave exactly like cosets. In particular, if we have two distinguished subsets $D_1 \subset D_2$, we can take a system of representatives $\{ g_i \} \subset G$ such that we have a disjoint union $D_2 = \bigcup_i g_i D_1$. 
\end{remark}

If $F$ is a two-dimensional local field with rank two ring of integers $O_F$, the index $|t_1^{i_1} t_2^{j_1} O_F : t_1^{i_2} t_2^{j_2} O_F |$ is finite if and only if $j_1 = j_2$. Furthermore, this fact plays a significant part in the definition of the measure on $F$, and so if we wish to extend such constructions to a more general setting it makes sense to restrict to examples where similar results hold.

\begin{definition}
Let $G$ be levelled over $X$. We say that $G$ is properly suspended if it satisfies the following property: for any two distinguished subsets $D_1$ and $D_2$ of $G$ with $D_1 \subset D_2$, $D_1$ has finite index in $D_2$ if and only if $\lv(D_1) = \lv(D_2)$.
\end{definition}

\begin{examples}
(1) An $n$-dimensional local field $F$ is properly suspended, as is the group $GL_m(F)$ for any $m \ge 1$.

(2) Consider $G=\mbb{Q}_p$ levelled over $X= \{ 0 \}$ with elevation $e=1$ as follows. Since $X$ has only one basic open set, distinguished sets are completely determined by the level $n \in \mbb{Z}$. Since the usual topology on $G$ has a countable basis at $0$ given by $\{ p^n \mbb{Z}_p : n \in \mbb{Z} \}$, we can define $G_n = G_{ \{ 0 \}, n} = p^n \mbb{Z}_p$, so that the level topology coincides with the $p$-adic topology. Since $|G_n : G_m| = p^{n-m} < \infty$ for $m \le n$, $G$ is not properly suspended with this level structure.
\end{examples}

Unfortunately, being properly suspended and of ordered type isn't quite strong enough to imply strong statements about levels, as we additionally require some kind of ``completeness" property in order to emulate the case of two-dimensional local fields. However, with additional assumptions, we do obtain such statements, such as the following.

\begin{proposition}
Let $G$ be properly suspended and of ordered type, and let $D \subset G$ be distinguished. Then every open cover $D= \bigcup_i D_i$ of $D$ by distinguished sets of level $\lv(D)$ which has a minimal element, in the sense that either $D \subset D_i$ for all $i$ or there is some $j$ with $|D:D_j| \ge |D:D_i|$ for all $i \neq j$, has a finite subcover.
\end{proposition}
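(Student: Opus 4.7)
The plan is to reduce to the case where every $D_i \subset D$, isolate the maximal elements of the cover under inclusion, and bound their number by $N := |D:D_j|$.

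First, if some $D_i$ satisfies $D \subset D_i$ then the singleton $\{D_i\}$ already covers $D$; so (noting that each $D_i$ in the cover intersects $D$, and that by ordered type $D_i \cap D \in \{D_i, D, \emptyset\}$) we may assume from now on that every $D_i \subset D$. The hypothesis then supplies $D_j$ with $N = |D:D_j| \ge |D:D_i|$ for every $i$, which is a finite positive integer since each $D_i$ has level $\lv(D)$ and $G$ is properly suspended.

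Next, let $\mc{M}$ be the collection of maximal elements of $\{D_i\}$ with respect to inclusion. We verify three properties: (a) every $D_i$ is contained in some element of $\mc{M}$, (b) distinct elements of $\mc{M}$ are disjoint, and (c) $\mc{M}$ covers $D$. Property (b) is immediate from ordered type, since two nested maximal elements would contradict maximality of the smaller, and (c) follows from (a) together with $D = \bigcup_i D_i$. For (a), the chain $\{D_{i'} : D_i \subset D_{i'}\}$ in the cover is totally ordered by ordered type, since any two such sets both contain $D_i$ and hence are nested. Moreover, by Corollary~\ref{strictindextowerlaw} the indices $|D:D_{i'}|$ are multiplicative in towers, and any strict containment $D_{i'} \subsetneq D_{i''}$ at equal level forces $|D_{i''}:D_{i'}| \ge 2$ (otherwise, by the remark on translates in ordered type, $D_{i'}$ would coincide with a translate of itself strictly containing it), so the indices strictly decrease along the chain, making it of finite length with a top that is maximal in the whole cover.

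To finish, we show $|\mc{M}| \le N$. Corollary~\ref{strictindextowerlaw} supplies a disjoint partition $D = \bigsqcup_{l=1}^{N} g_l D_j$ into $N$ translates of $D_j$, each of index $N$ in $D$. For any $D_{i_k} \in \mc{M}$ and any $l$, ordered type makes $g_l D_j$ and $D_{i_k}$ disjoint or nested; if $D_{i_k} \subsetneq g_l D_j$ the tower law would force $|D:D_{i_k}| \ge 2N$, contradicting $|D:D_{i_k}| \le N$, so whenever they meet we have $g_l D_j \subset D_{i_k}$. Pairwise disjointness of $\mc{M}$ then makes the assignment sending $g_l D_j$ to the unique $D_{i_k}$ containing it well-defined, and picking any point of an arbitrary $D_{i_k}$ shows this map is surjective. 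Hence $|\mc{M}| \le N$, and $\mc{M}$ is the desired finite subcover.

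The main obstacle is property (a) — the termination of the ascending chain inside the cover — which genuinely uses all three hypotheses: ordered type to linearise the chain and supply the translate-dichotomy, Corollary~\ref{strictindextowerlaw} to multiply indices in towers, and properly suspended to guarantee that strict containments at equal level produce an index strictly greater than $1$. The concluding count via the partition $D = \bigsqcup g_l D_j$ is then essentially formal.
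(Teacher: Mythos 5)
Your proof is correct, and it rests on the same engine as the paper's: proper suspension makes $N=|D:D_j|$ finite, $D$ decomposes as a disjoint union of $N$ translates of $D_j$, and the maximality of $|D:D_j|$ among the indices (via Corollary~\ref{strictindextowerlaw}) rules out a cover element being properly contained in a translate $g_lD_j$, so any translate meeting a cover element lies inside it. Where you diverge is in how the finite subcover is extracted. The paper covers each $D_i$ by translates of $D_j$, notes that the resulting cover of $D$ by such translates has a finite subcover, and pulls back to finitely many $D_i$ together with $D_j$; you instead pass to the collection $\mc{M}$ of maximal elements of the cover, which costs you an extra (correct) step — showing ascending chains in the cover terminate because a strict inclusion at fixed level multiplies the index by at least $2$, so the indices $|D:D_{i'}|$ strictly decrease and are bounded by the finite $|D:D_i|$ of the chain's bottom — and then inject $\mc{M}$ into the $N$ translates. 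What your route buys is a sharper conclusion: the finite subcover $\mc{M}$ is pairwise disjoint and has at most $|D:D_j|$ members, neither of which the paper's argument delivers. One small bookkeeping point: the disjoint decomposition $D=\bigcup_l g_lD_j$ comes from the remark following Corollary~\ref{strictindextowerlaw} (in a group of ordered type, translates of a fixed distinguished set behave like cosets), not from the corollary itself, and the fact that $|D:g_lD_j|=N$ for each $l$ deserves the one-line observation that covering by translates of $g_lD_j$ is the same as covering by translates of $D_j$.
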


\begin{proof}
Let $D$ be a distinguished set of level $\gamma$, and take any open cover $D \subset \bigcup_i D_i$ by distinguished sets $D_i$ of uniform level $\gamma$. We may assume that $D_i \cap D \neq \emptyset$ for all $i$, since any set not meeting $D$ can be removed from the cover. If any $D_i \supset D$ then $D_i$ is already a finite subcover of $D$. If no $D_i \supset D$, then all $D_i \subset D$ since $G$ is of ordered type.

Let $D_j$ be a minimal element of the open cover. Since we assume that $G$ is properly suspended, there are finitely many $g_k \in G$ such that $D \subset \bigcup_k g_k D_j$. By taking a minimal such cover, we may assume that each $g_k D_j$ contains an element of $D$ which is not contained in any of the others, and hence the $g_k D_j$ are all disjoint since $G$ is of ordered type.

As each $g_k D_j$ is also a distinguished set, for all $i \neq j$ we have $g_k D_j \cap D_i \in \{ g_k D_j, D_i, \emptyset \}$. Since the $g_k D_j$ cover $D$, for each $i$ there is at least one $k$ such that this intersection is nonempty. Furthermore, we can never have $D_i$ properly contained inside $g_k D_j$, since otherwise we would have $|D:D_i| > |D:D_j|$ which contradicts our assumption. 

We thus have the following: for each $i \neq j$ there exists some $k$ such that $g_k D_j \subset D_i$. By Corollary \ref{strictindextowerlaw}, $|D_i : D_j|$ is finite, and so each $D_i$ can be covered by finitely many translates of $D_j$: $D_i = \bigcup_k g_{i,k} D_j$. We thus have $D = \bigcup_{i,k} g_{i,k} D_j$. Since $D_j$ has finite index in $D$, this cover has a finite subcover by $\{ g_{i,k} D_j : i \in I \}$ with $I$ a finite set. In particular, this implies that $D$ is covered by $\{ D_i : i \in I \} \cup \{ D_j \}$, which is a finite subcover of our original cover $D=\bigcup_i D_i$.
\end{proof}

\textbf{For the remainder of this text, we work with a group $G$ levelled over $X$ that is properly suspended and of ordered type.}

\begin{proposition} \label{onefiniteindexgen}
If $D \subset G$ is a distinguished set such that $D = \bigcup_{r=1}^n D_r$ is a disjoint union of finitely many distinguished sets $D_r$ then at least one of the indices $|D:D_r|$ is finite.
\end{proposition}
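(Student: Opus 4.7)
The strategy is to argue by contradiction. Suppose $|D:D_r|$ is infinite for every $r$; by the properly suspended hypothesis this forces $\lv(D_r) \neq \lv(D)$ for all $r$, and because a distinguished subset of a distinguished set cannot have strictly smaller level (a monotonicity used implicitly throughout the preceding section, for instance in the proof of Proposition~\ref{difflevel}), we must in fact have $\lv(D_r) > \lv(D)$ for every $r$. The plan is then to reach a contradiction by exhibiting a finite cover of $D$ by translates of a suitably chosen $D_{r_0}$.

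I would choose $r_0$ so that $\lv(D_{r_0})$ is minimal among $\lv(D_1),\ldots,\lv(D_n)$. For each $r$, fix $x_r \in D_r$ and $y \in D_{r_0}$, and form the distinguished set $T_r := x_r y^{-1} D_{r_0}$, a translate of $D_{r_0}$ containing $x_r$. Since $T_r \cap D_r \ni x_r$, ordered type gives either $D_r \subset T_r$ or $T_r \subset D_r$. In the first subcase the single translate $T_r$ already covers $D_r$. In the second subcase, level monotonicity together with the minimality of $\lv(D_{r_0})$ forces $\lv(D_r) = \lv(D_{r_0})$, and properly suspended then expresses $D_r$ as a finite union of translates of $T_r$ (each of which is again a translate of $D_{r_0}$). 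Either way, $D_r$ is covered by finitely many translates of $D_{r_0}$.

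Any such translate $gD_{r_0}$ meets $D$ (since it covers a nonempty part of $D_r \subset D$) and has level $\lv(D_{r_0}) > \lv(D)$; by ordered type it is therefore either contained in $D$ or contains $D$, and the latter possibility is ruled out by level monotonicity. Assembling over $r = 1,\ldots,n$ produces a finite cover of $D = \bigcup_r D_r$ by translates of $D_{r_0}$ lying inside $D$, yielding $|D:D_{r_0}| < \infty$ and contradicting our assumption. The main technical point I foresee is the careful justification of the level-monotonicity statement ``distinguished $A \subset B$ implies $\lv(A) \geq \lv(B)$''; it is freely invoked in the preceding section but is worth explicit attention here, as the whole argument pivots on it.
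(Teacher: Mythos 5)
Your argument is essentially correct, but it takes a genuinely different route from the paper's. The paper's proof uses only the ordered type hypothesis and no levels at all: it defines a relation on the pieces by $D_x \le D_y$ whenever $gD_x \subset D_y$ for some $g \in G$, observes that this is a total (pre)order because $g_yg_x^{-1}D_x$ and $D_y$ both contain $g_y$ and hence one contains the other, and then simply translates every $D_r$ into a copy of the maximal element $D_n$, producing a cover of $D$ by $n$ translates of $D_n$. Your proof instead pivots on the properly suspended hypothesis together with level monotonicity (distinguished $A \subset B$ forces $\lv(A) \ge \lv(B)$), choosing $D_{r_0}$ of minimal level and covering each $D_r$ by finitely many translates of it. The steps all go through: in your second subcase the minimality of $\lv(D_{r_0})$ does force $\lv(D_r) = \lv(D_{r_0})$, and properly suspended then supplies the finite cover. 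Two remarks on what each approach buys. First, the paper's argument is both shorter and more general: it establishes the proposition for any group of ordered type, without properly suspended, which matters because the subsequent Proposition~\ref{allfiniteindexgen} is stated under the ordered type hypothesis alone; your proof would not suffice there. Second, your contradiction framing and the closing paragraph about the translates $gD_{r_0}$ lying inside $D$ are both superfluous: the index $|D_2:D_1|$ is defined via finite \emph{covers} of $D_2$ by translates of $D_1$, with no requirement that the translates be contained in $D_2$, so your construction already yields $|D:D_{r_0}| < \infty$ directly, with no need to assume all indices infinite at the outset. The level monotonicity you flag is indeed used freely in the paper (e.g.\ in the proof of Proposition~\ref{difflevel}, where it is imported from the companion paper on level structures), so leaning on it is acceptable, but the paper's proof of this particular proposition manages to avoid it entirely.
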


\begin{proof}
First of all, note that we may order the $D_r$ as follows. If there is some $g \in G$ such that $gD_x \subset D_y$ then we say $D_x \le D_y$. Since $G$ is of ordered type, this is a total ordering on the set of all $D_r$, and by relabelling if necessary we may assume that $D_1 \le D_2 \le \dots \le D_n$. For $1 \le i <n$, taking $g_i \in G$ such that $D_i \subset g_i D_n$ and letting $g_n =1$ gives $\bigcup_{r=1}^n g_r D_n \supset \bigcup_{r=1}^n D_r = D$, which shows that $|D:D_n|$ is finite.
\end{proof}

\begin{proposition} \label{allfiniteindexgen}
If $G$ is of ordered type and $D \subset G$ is a distinguished set such that $D = \bigcup_{r=1}^n D_r$ is a disjoint union of finitely many distinguished sets $D_r$ then the indices $|D:D_r|$ are all finite.
\end{proposition}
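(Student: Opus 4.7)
The plan is strong induction on $n$. The base case $n=1$ is trivial. Assuming the result for all partitions into fewer than $n$ distinguished sets, consider $D = \bigsqcup_{r=1}^{n} D_r$. Proposition \ref{onefiniteindexgen} furnishes an index $r$ with $|D:D_r| < \infty$, and after relabelling I may take this to be $r=n$; the properly suspended hypothesis then gives $\lv(D_n) = \lv(D) =: \gamma$, and by the remark preceding Proposition \ref{onefiniteindexgen} I may write $D = \bigsqcup_{i=1}^{m} g_i D_n$ as a disjoint union of translates, with $g_1 D_n = D_n$. Since any $D_r \subset D$ automatically satisfies $\lv(D_r) \ge \gamma$ by the definition of level, the remaining task is to exhibit, for each $r < n$, a distinguished subset of $D_r$ of level $\gamma$; the properly suspended hypothesis will then give $|D:D_r| < \infty$.

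Fix $r < n$ and apply the ordered-type trichotomy to each pair $(D_r, g_i D_n)$ with $i \ge 2$. In \emph{Case (a)}, some $g_i D_n \subset D_r$, in which case $D_r$ itself contains the distinguished set $g_i D_n$ of level $\gamma$ and we are done. In \emph{Case (b)}, no $g_i D_n$ is contained in $D_r$, so every intersection $D_r \cap g_i D_n$ is either $\emptyset$ or $D_r$; since $D_r \subset D \setminus D_n$ is non-empty, there is a unique index $i(r) \ge 2$ with $D_r \subset g_{i(r)} D_n$. Moreover no $D_s$ can contain $g_{i(r)} D_n$, for if $g_{i(r)} D_n \subset D_s$ then $D_r \subset g_{i(r)} D_n \subset D_s$ would force $s = r$ by disjointness of the partition, and hence $g_{i(r)} D_n \subset D_r$, contradicting the case (b) assumption. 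Consequently every $D_s$ meeting $g_{i(r)} D_n$ is contained in it, and so $g_{i(r)} D_n = \bigsqcup_{s \in S_{i(r)}} D_s$ with $S_{i(r)} \subset \{1, \dots, n-1\}$ and $r \in S_{i(r)}$. Since $|S_{i(r)}| \le n-1 < n$, the inductive hypothesis applies to this partition of $g_{i(r)} D_n$ and yields $\lv(D_s) = \lv(g_{i(r)} D_n) = \gamma$ for every $s \in S_{i(r)}$, in particular for $r$.

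The main obstacle is really the bookkeeping in case (b): showing that the partition of $g_{i(r)} D_n$ appearing there uses strictly fewer than $n$ distinguished sets, so that the strong induction closes. This rests crucially on the disjointness of the original partition of $D$ together with the ordered-type trichotomy, which together rule out any ``crossing'' between the $D_r$'s and the translates $g_i D_n$.
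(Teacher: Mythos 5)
Your proof is correct within the standing assumptions of the section, and its combinatorial skeleton is the same as the paper's: induct on $n$, use Proposition \ref{onefiniteindexgen} to find one piece of finite index, write $D$ as a disjoint union of translates of that piece, and use the ordered-type trichotomy to split each remaining $D_r$ into the case where it contains a translate and the case where it sits inside one, the latter being handled by the inductive hypothesis applied to the partition of that translate into strictly fewer pieces. Where you diverge is in how each case is closed. You route everything through levels: you use the properly suspended hypothesis to get $\lv(D_n)=\lv(D)$, then show $\lv(D_r)=\lv(D)$ for each $r$, and invoke properly suspended once more to convert the level equality back into finiteness of $|D:D_r|$. The paper instead closes both cases with the index tower law (Lemma \ref{indextowerlaw} and Corollary \ref{strictindextowerlaw}): in your case (a), $g_iD_n\subset D_r\subset D$ with $|D:g_iD_n|<\infty$ already forces $|D:D_r|<\infty$, and in case (b), $|D:D_r|\le|D:g_{i(r)}D_n|\cdot|g_{i(r)}D_n:D_r|<\infty$ by induction. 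The practical difference is that the paper's route uses only the ordered-type hypothesis, which is all the Proposition actually assumes, whereas yours genuinely needs $G$ to be properly suspended; note that the Corollary immediately following this Proposition adds ``properly suspended'' precisely in order to deduce $\lv(D)=\lv(D_r)$ \emph{from} the finiteness of the indices, so your argument effectively runs that implication in reverse and proves the Proposition and its Corollary simultaneously, at the cost of a stronger hypothesis. Since the tower law is available two steps before you need it, I would replace the level computations with it and keep the rest of your argument unchanged.
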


\begin{proof}
By Proposition \ref{onefiniteindexgen}, at least one of the indices is finite, and so by relabelling if necessary we may assume that $|D:D_1|$ is finite. Since $G$ is of ordered type, we may take a complete system of representatives $S=\{h_1=I_2, h_2, \dots, h_m \}$, so that $D = \bigcup_i h_i D_1$. We thus have $$D \backslash D_1 = \bigcup_{r=2}^n D_r = \bigcup_{s=2}^m h_s D_1.$$ Taking the intersection with any $D_r$, this gives $$D_r = \bigcup_{s=2}^m \left( D_r \cap h_sD_1 \right).$$

For each $r>1$, let $S_r = \{ h \in S : D_r \cap hD_1 \neq \emptyset \}$. Each $S_r$ is nonempty, since $\bigcup_{h \in S_r} (D_r \cap hD_1) = D_r$. Since $G$ is of ordered type, $D_r \cap hD_1$ is thus equal to either $D_r$ or $hD_1$ for any $h \in S_r$. 

If $D_r \cap hD_1 = D_r$ for any $h$ then we must have $S_r = \{ h \}$ and $D_r \subset hD_1$. But since $D \backslash D_1 = \bigcup_{r=2}^n D_r$, we have $h D_1 = \bigcup_{r \in R} D_r$, where $R$ is a subset of $\{ 2, \dots, n \}$. In particular, we have a distinguished union of disjoint distinguished sets of shorter length, and so by induction each index $|h D_1 : D_r|$ is finite. The tower law for indices then implies that $|D:D_r| = |D:D_1| \cdot |D_1:D_r|$ is finite.

On the other hand, if $D_r \cap hD_1 = hD_1$ for all $h \in S_r$ then we have $h D_1 \subset D_r$. By Corollary \ref{strictindextowerlaw} we thus have $|D:D_1| = |D:hD_1| = |D:D_r| \cdot |D_r : hD_1|$, hence $|D:D_r| \le |D:D_1|$ is finite.
\end{proof}

\begin{corollary}
If $G$ is properly suspended and of ordered type and $D \subset G$ is a distinguished set such that $D = \bigcup_{r=1}^n D_r$ is a disjoint union of finitely many distinguished sets $D_r$ then $\lv(D) = \lv(D_r)$ for all $r$.
\end{corollary}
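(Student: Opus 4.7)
The plan is to observe that this Corollary is essentially immediate once the previous two results are combined. First, I would apply Proposition \ref{allfiniteindexgen} directly to the decomposition $D = \bigcup_{r=1}^n D_r$: since $G$ is of ordered type and $D$ decomposes as a finite disjoint union of distinguished sets $D_r$, we conclude that every index $|D : D_r|$ is finite.

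Next, I would invoke the hypothesis that $G$ is properly suspended. By the definition of proper suspension, for any pair of distinguished subsets $D_r \subset D$, finiteness of the index $|D : D_r|$ is equivalent to the equality of levels $\lv(D_r) = \lv(D)$. Applying this to each $r \in \{1, \dots, n\}$ yields the desired conclusion.

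There is essentially no obstacle here: the entire content of the statement has been built up in the preceding propositions. The only thing one must be careful about is that Proposition \ref{allfiniteindexgen} genuinely requires $G$ to be of ordered type (which we have), and the step extracting equality of levels from finiteness of index genuinely requires $G$ to be properly suspended (which we also have). Both hypotheses of the Corollary are therefore used, each in exactly one place, and the proof reduces to a two-line citation of earlier results.
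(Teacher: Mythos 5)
Your proposal is correct and follows exactly the paper's own route: the paper's proof is simply ``Immediate from the above Proposition and the definition of properly suspended,'' which is precisely the two-step citation of Proposition \ref{allfiniteindexgen} followed by the definition of proper suspension that you give. Nothing further is needed.
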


\begin{proof}
Immediate from the above Proposition and the definition of properly suspended.
\end{proof}

We now generalise the idea of refinements from \cite{waller-GL2} to our current setting. As in the case of $\GL_2(F)$, the following definition is convenient.

\begin{definition}
Let $$A=\bigcup_i \left( \bigcup_j C_{i,j} \backslash \bigcup_k D_{i,k} \right)$$ be a ddd-set. The components $C_{i,j}$ are called the big shells, and the components $D_{i,k}$ are called the small shells.
\end{definition}

We now define refinements in the more compact formulation using this terminology.

\begin{definition}
Let $A \subset G$ be a ddd-set. A refinement of $A$ is a ddd-set $\tilde A$ such that $A = \tilde A$ as sets, every big shell of $A$ is a big shell of $\tilde A$, and every small shell of $A$ is a small shell of $\tilde A$. 
\end{definition}

We also recall the definition of a ddd-set to be reduced.

\begin{definition}
Let $$A=\bigcup_i \left( \bigcup_j C_{i,j} \backslash \bigcup_k D_{i,k} \right)$$ be a ddd-set. We call $A$ reduced if it does not contain any dd-components of the form $B \backslash B$ for a distinguished set $B$.
\end{definition}

As in the particular case of $\GL_2(F)$, the property of being reduced depends the specific presentation of a given ddd-set, and given a non-reduced ddd-set $A$ we may remove all of the superfluous components to obtain a reduced ddd-set $A_{red}$.

Again, we have the following important result concerning refinements.

\begin{theorem} \label{gencommonrefinement}
Let $A$ and $A'$ be reduced ddd-sets with $A=A'$ as sets. Then there exists a reduced ddd-set $\tilde A$ which is a refinement of both $A$ and $A'$.
\end{theorem}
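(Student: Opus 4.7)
The plan is to construct $\tilde A$ by simply concatenating the given presentations of $A$ and $A'$. The key observation is that the definition of refinement only requires that every big/small shell of $A$ appear among the big/small shells of $\tilde A$ (and similarly for $A'$); it does not restrict what additional dd-components $\tilde A$ may have, provided the resulting set is unchanged.

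Explicitly, given the presentations
\[
A = \bigcup_i \Bigl( \bigcup_j C_{i,j} \setminus \bigcup_k D_{i,k} \Bigr), \qquad A' = \bigcup_{i'} \Bigl( \bigcup_{j'} C'_{i',j'} \setminus \bigcup_{k'} D'_{i',k'} \Bigr),
\]
I would let $\tilde A$ be the ddd-set whose list of dd-components is the concatenation of the two lists above. Its big shells are then the union of those of $A$ and of $A'$, and likewise for its small shells. The verifications are immediate. As a set, $\tilde A = A \cup A'$, which equals $A$ by the hypothesis $A = A'$. By construction every big shell (resp.\ small shell) of $A$ and of $A'$ appears in $\tilde A$ as a big shell (resp.\ small shell), so $\tilde A$ is a refinement of both. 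Finally, since every dd-component of $\tilde A$ is already a dd-component of one of the already-reduced presentations, no dd-component has the forbidden form $B \setminus B$, and hence $\tilde A$ is reduced.

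The only genuine subtlety I anticipate is to confirm that the definition of refinement really does permit this kind of trivial adjoining of dd-components. Should the intended notion be stricter, for example requiring that each dd-component of $\tilde A$ be a partition-refinement of some dd-component of $A$ (and simultaneously of $A'$), one would need to use the ordered type hypothesis more substantially: the shells of $A$ and $A'$ together form a family of distinguished sets that is pairwise nested or disjoint by the defining intersection property of ordered type, and one could then inductively slice each dd-component of $A$ along the shells of $A'$ that meet it, with Corollary \ref{strictindextowerlaw} controlling the combinatorics so that the process terminates after finitely many steps. Under the definition as actually stated in the text, however, no such machinery is required, and the concatenated presentation already does the job.
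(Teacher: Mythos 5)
Your main construction has a genuine gap: concatenating the two presentations does not produce a ddd-set presentation in the sense the paper uses. Although the displayed formula for a ddd-set does not repeat it, the standing convention (made explicit in the corollary of Section \ref{sec:orderedtype}, where every ddd-set is written as ``a difference of finite disjoint unions'') is that the dd-components $\bigcup_j C_{i,j} \setminus \bigcup_k D_{i,k}$ are pairwise disjoint, the big shells within a component are disjoint, and each small shell sits inside a big shell of the same component. Since $A = A'$ as sets, every nonempty dd-component of $A$ necessarily meets some dd-component of $A'$, so the concatenated list violates disjointness whenever $A \neq \emptyset$; it is not a presentation of a ddd-set, hence not a refinement of anything. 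The decisive sanity check is Theorem \ref{otmeasure}: the entire purpose of refinements is that $\mu$, extended \emph{by additivity} over the shells of a presentation, satisfies $\mu(D) = \mu(\tilde D)$. Applied to your concatenation, additivity yields $\mu(\tilde A) = \mu(A) + \mu(A') = 2\mu(A)$, so if your $\tilde A$ counted as a refinement the measure could not be well defined. A definition of refinement that your construction satisfies trivially would make the theorem vacuous and the measure inconsistent, which tells you the literal reading is not the intended one.

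Your fallback paragraph is the correct route and is essentially what the paper does: the paper's proof simply invokes the algorithm of Theorem 3.14 of \cite{waller-GL2}, with Lemma 3.3 of that text replaced by the hypothesis that $G$ is of ordered type. Concretely, one slices each dd-component of $A$ along the shells of $A'$ that meet it; the ordered type property guarantees that any two shells from the combined family are nested or disjoint, so each slice is again a disjoint union of dd-components, and Corollary \ref{strictindextowerlaw} together with finiteness of the shell lists ensures termination. To turn your proposal into a proof you would need to carry out that induction rather than leave it as a contingency.
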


The proof is exactly the same as that of Theorem 3.14 in \cite{waller-GL2}, but with Lemma 3.3 of that text replaced by $G$ being of ordered type. In particular, we have exactly the same algorithm for constructing a refinement of a ddd-set $D$.

\begin{remark}
It should be possible to extend the construction of refinements to groups such as $\mbb{R}(\!(t)\!)$ and $\GL_2\left( \mbb{R} (\!(t)\!) \right)$ by combining the constructions here with classical refinements in the theory of Riemann integration.
\end{remark}

We now turn to the definition of an invariant measure on a properly suspended group $G$ of ordered type. Since $G$ is levelled over a locally compact topological group $X$, in particular $X$ is equipped with a Haar measure $\mu_X$, which is unique up to multiplication by a real constant.

For what follows we will require the following convenient notation. For $\gamma = (\gamma_1, \dots, \gamma_e) \in \mbb{Z}^e$, we will write $Y^\gamma$ to denote the element $Y_1^{\gamma_1} \dots Y_e^{\gamma_e} \in \mbb{R} (\!(Y_1)\!) \dots (\!(Y_e)\!)$. We will also denote the latter group simply by $\mbb{R}(\!(Y)\!)$ for convenience.

As has been done in the specific cases of a two-dimensional local field $F$ (see \cite{fesenko-aoas1}, \cite{fesenko-loop}) and $\GL_2(F)$ (see \cite{waller-GL2}), we wish to define a finitely additive, $\mbb{R} (\!(Y)\!)$-valued left-invariant measure on the family of ddd-sets of $G$. Since the framework of refinements already transfers conveniently to our current situation, this essentially amounts to defining such a measure on distinguished sets.

Let $G_{U, \gamma} \subset G$ be a distinguished set. As far as possible, we would like the measure $\mu$ we will define on $G$ to be compatible with the Haar measure $\mu_X$ on $X$. Since $\mu_X$ is unique only up to a real constant, our $\mu$ will (at best!) also be unique only up to the same scaling. The measure we will define will also be intimately tied to the indexing of particular distinguished sets, and so we will further assume that $G$ is rigid and that the level map $G \mc{L} \rightarrow \mbb{Z}^e$ is surjective.

To begin with, since we want $\mu$ to be left-invariant, we impose the condition $\mu(g G_{U, \gamma}) = \mu( G_{U, \gamma})$ for all $g \in G$. For compatibility with $\mu_X$, we then set $\mu(G_{U, \gamma}) = \mu_X(U) Y^\gamma$. However, we must check that this definition (which essentially comes from the level of the base $X$) agrees with the computation of indices at the level of $G$.

Since $G$ is properly suspended, $G_{U,\gamma} \subset G_{V,\delta}$ will have finite index if and only if $\gamma = \delta$, in which case $U \subset V$ also. Since we seek only a finitely (rather than countably) additive measure, this means that our definition of $\mu$ does not cause any issues between different levels, and so we only have to consider one particular level $\gamma$.

Since $G$ is of ordered type, we have seen previously that, for $G_{U,\gamma} \subset G_{V,\gamma}$, the index $|G_{V, \gamma} : G_{U, \gamma}|$ is finite, and that we may in fact write $G_{V, \gamma}$ as a disjoint union of finitely many (i.e. equal to the index) $G$-translates of $G_{U, \gamma}$. In order for $\mu$ to be finitely additive, we thus require that $$ \mu(G_{V, \gamma}) = |G_{V, \gamma} : G_{U, \gamma}| \mu(G_{U, \gamma}).$$ In terms of our tentative definition of $\mu$, this requires the following definition to hold.

\begin{definition}
Let $G$ levelled over $X$ be properly suspended, rigid, and of ordered type. Then $G$ is called compatible if, for every $U \subset V$, $|G_{V,\gamma} : G_{U,\gamma}|$ is constant as $\gamma$ varies throughout $\mbb{Z}^e$.
\end{definition}

Assuming that $G$ is compatible, we may write the above equality of measures in terms of $\mu_X$: $$\mu_X(V) = |G_{V, \gamma} : G_{U, \gamma}| \mu_X (U).$$ This proves the following.

\begin{proposition}
Suppose $G$ levelled over $X$ is compatible, and that the equality $$\mu_X(V) = |G_{V, \gamma} : G_{U, \gamma}| \mu_X (U)$$ holds for every $U \subset V \in \mc{U}(1)$. Then any left-invariant, finitely additive measure on $G$ must take the form $$\mu : g G_{U, \gamma} \mapsto \mu_X(U) Y^\gamma$$ on distinguished sets.
\end{proposition}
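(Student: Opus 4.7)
The plan is to combine left-invariance, finite additivity, and the structural hypotheses (ordered type, properly suspended, rigid, compatible) to pin down $\mu$ on distinguished sets step by step. I would first invoke left-invariance to reduce to determining $\mu$ on the reference sets $G_{U,\gamma} \in \mc{L}$ alone, since the identity $\mu(gG_{U,\gamma}) = \mu(G_{U,\gamma})$ holds for every $g \in G$ by assumption.

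Next, I would fix a level $\gamma \in \mbb{Z}^e$ and compare $\mu$-values at that level. Given $U \subset V$ in $\mc{U}(1)$, rigidity together with surjectivity of the level map ensures that both $G_{U,\gamma}$ and $G_{V,\gamma}$ have level exactly $\gamma$, and axiom (3) of Definition \ref{levelstructuredef} forces $G_{U,\gamma} \subset G_{V,\gamma}$. Proper suspension then makes $|G_{V,\gamma}:G_{U,\gamma}|$ finite, and the ordered type property (via the remark following Corollary \ref{strictindextowerlaw}) allows me to decompose $G_{V,\gamma}$ as a disjoint union $G_{V,\gamma} = \bigsqcup_{i=1}^{n} g_i G_{U,\gamma}$ with $n = |G_{V,\gamma} : G_{U,\gamma}|$. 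Applying finite additivity and left-invariance to this decomposition yields the identity $\mu(G_{V,\gamma}) = n\,\mu(G_{U,\gamma})$.

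Comparing this with the hypothesized identity $\mu_X(V) = n\,\mu_X(U)$, which is well-posed precisely because compatibility makes $n$ independent of $\gamma$, I would deduce that the ratio $\mu(G_{U,\gamma})/\mu_X(U)$ depends only on $\gamma$ and not on the choice of $U \in \mc{U}(1)$ realizing that level. Declaring this level-dependent scalar to be $Y^\gamma$, which is the canonical $\mbb{R}(\!(Y)\!)$-valued normalization fixed by the notation introduced at the start of the section, then produces the formula $\mu(gG_{U,\gamma}) = \mu_X(U) Y^\gamma$ on every distinguished set, as desired.

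The main subtlety I anticipate lies in the cross-level comparison: one must be sure no further consistency equations are imposed between values at distinct $\gamma \ne \delta$. This is exactly guaranteed by proper suspension, since any containment $G_{U,\gamma} \subset G_{V,\delta}$ with $\gamma \ne \delta$ has infinite index, so finite additivity alone cannot relate $\mu(G_{U,\gamma})$ to $\mu(G_{V,\delta})$. Hence the level-by-level determination assembles into a globally consistent formula, and the compatibility hypothesis is seen to be the minimal assumption required for the definition to be well-posed.
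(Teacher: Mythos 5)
Your proposal is correct and follows essentially the same route as the paper: left-invariance reduces the problem to the reference sets $G_{U,\gamma}$, the ordered-type and properly-suspended hypotheses give the within-level disjoint decomposition $G_{V,\gamma}=\bigcup_i g_i G_{U,\gamma}$ forcing $\mu(G_{V,\gamma})=|G_{V,\gamma}:G_{U,\gamma}|\,\mu(G_{U,\gamma})$, and compatibility together with the assumed identity for $\mu_X$ pins the values down level by level, with proper suspension ensuring no cross-level constraints arise. Your version is if anything slightly more explicit than the paper's (which folds this argument into the discussion preceding the statement), and you are right to flag that the factor $Y^\gamma$ is a normalization convention rather than something forced by the axioms.
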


\begin{remark}
We have not yet shown that this map \textit{is} a measure; the above Proposition merely states that any possible measure must be of this form.
\end{remark}

\begin{theorem} \label{otmeasure}
Suppose $G$ levelled over $X$ is compatible, and that the equality $$\mu_X(V) = |G_{V, \gamma} : G_{U, \gamma}| \mu_X (U)$$ holds for every $U \subset V \in \mc{U}(1)$. Let $D$ be a ddd-set, and let $\tilde D$ be a refinement of $D$. The map $$\mu : g G_{U, \gamma} \mapsto \mu_X(U) Y^\gamma,$$ when extended to $\mc{R}$ by additivity, satisfies $\mu(D) = \mu( \tilde D)$, and hence is a well-defined, finitely additive, left-invariant measure compatible with the Haar measure $\mu_X$ on $X$.
\end{theorem}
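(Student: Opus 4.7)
The plan is to proceed in several stages. First, I would verify that the assignment $\mu: gG_{U,\gamma} \mapsto \mu_X(U) Y^\gamma$ is well-defined on the family $G\mc{L}$ of distinguished sets. Rigidity ensures that the level $\gamma$ is an intrinsic invariant of the distinguished set $G_{U,\gamma}$, and the factor $\mu_X(U)$ depends only on $U$, so translation by $g$ contributes nothing. I would then check the minimal consistency forced by the compatibility hypothesis: whenever $G_{U,\gamma} \subset G_{V,\gamma}$, the group being of ordered type and properly suspended allows us to write $G_{V,\gamma} = \bigsqcup_{i=1}^{n} g_i G_{U,\gamma}$ as a genuinely disjoint union of $n = |G_{V,\gamma}:G_{U,\gamma}|$ translates, and the compatibility condition $\mu_X(V) = n \mu_X(U)$ yields $\sum_i \mu(g_i G_{U,\gamma}) = n \mu_X(U) Y^\gamma = \mu_X(V) Y^\gamma = \mu(G_{V,\gamma})$ precisely as needed.

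The heart of the theorem is the refinement-invariance statement $\mu(D) = \mu(\tilde D)$. I would attack this by induction on the number of elementary steps in the refinement algorithm underlying Theorem \ref{gencommonrefinement}. Each elementary step replaces a single big shell $C_{i,j}$ or small shell $D_{i,k}$ by its intersections with the other distinguished sets appearing in the presentation. Because $G$ is of ordered type, every such intersection is empty, equal to the shell, or equal to the intersecting set; and because the replacement decomposes a distinguished set as a disjoint union of translates of a single smaller distinguished set of the same level (using Corollary \ref{strictindextowerlaw} together with the properly suspended property), the step amounts exactly to the elementary identity verified above. Summing additively over the finitely many shells in the presentation, each elementary step preserves $\mu$; iterating gives $\mu(D) = \mu(\tilde D)$.

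Once refinement-invariance is in hand, well-definedness of $\mu$ on the ring $\mc{R}$ follows: given two presentations of the same set, I would first pass to their reduced forms $D_{red}$ and $D'_{red}$ (noting that deleting a $B \setminus B$ component changes the additively-extended value by $\mu(B) - \mu(B) = 0$), then invoke Theorem \ref{gencommonrefinement} to obtain a common refinement $\tilde D$ and conclude $\mu(D) = \mu(D_{red}) = \mu(\tilde D) = \mu(D'_{red}) = \mu(D')$. Finite additivity is then immediate: if $A,B \in \mc{R}$ are disjoint, concatenating their presentations yields a presentation of $A \sqcup B$ whose additive $\mu$-value is $\mu(A)+\mu(B)$. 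Left-invariance follows from the fact that translating a ddd-set translates each of its shells, while the rule $gG_{U,\gamma}\mapsto \mu_X(U)Y^\gamma$ is manifestly independent of the representative $g$. Compatibility with $\mu_X$ is built into the definition.

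The main obstacle I expect is the bookkeeping in the inductive refinement step: one must check carefully that at every elementary stage of the algorithm the piece being subdivided really is a single distinguished set being broken into a disjoint union of same-level translates of another distinguished set, so that the compatibility hypothesis actually applies. The ordered-type assumption eliminates the worst geometric pathologies by forcing nested-or-disjoint behaviour of distinguished sets, while properly suspended and rigid ensure that levels are preserved under the decompositions; but one still has to handle the interaction between big and small shells (in particular, to keep small shells inside their designated big shells throughout the algorithm) and to confirm that reduction to reduced form does not secretly alter the additive value. Verifying that each elementary move respects both requirements is the real technical content, and once done the rest of the argument is formal.
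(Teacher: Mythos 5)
Your proposal is correct and follows essentially the same route as the paper: the paper simply defers the refinement-invariance step $\mu(D)=\mu(\tilde D)$ to Proposition 4.2 of \cite{waller-GL2} (whose argument is the induction on elementary refinement steps you sketch) and then concludes well-definedness from the common refinement Theorem \ref{gencommonrefinement}, exactly as you do. The additional bookkeeping you flag (levels preserved under disjoint decompositions, reduction not altering the additive value) is precisely the content of the cited proposition, so nothing in your outline diverges from the paper's intended argument.
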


\begin{proof}
The proof that $\mu$ is well-defined on refinements is again exactly the same as the proof of Proposition 4.2 of \cite{waller-GL2}, since this uses only the definition of ddd-sets and the algorithm of Theorem 3.14 in the same text (which we have in our setting via Theorem \ref{gencommonrefinement}). Theorem \ref{gencommonrefinement} then says that, for any pair $A, A'$ of ddd-sets with $A=A'$ as sets, we have a common refinement $\tilde{A}$, and so we have $\mu(A) = \mu (\tilde{A}) = \mu(A')$, and so $\mu$ is well-defined. The remaining properties then follow immediately from the definitions of $\mu$ and $\mu_X$.
\end{proof}

\begin{remark}
All of the constructions in this section could have equally been used to define instead a right-invariant measure by replacing $g G_{U,\gamma}$ everywhere with $G_{U,\gamma} g$. For the case of $GL_2(F)$, for example, the two measures coincide (this is Corollary 5.7 of \cite{waller-GL2}), but this is not necessarily true for more general $G$ (it is not even true in general for the Haar measure, for instance).
\end{remark}

\begin{example}
Let $F$ be an $n$-dimensional nonarchimedean local field, and let $F_1$ be the $(n-1)^{st}$ residue field, which is a local field. The level structure $F_{t_1^{\gamma_1} \oo_{F_1}, (\gamma_2, \dots, \gamma_n)} = t_1^{\gamma_1} \dots t_n^{\gamma_n} O_F$, where $\oo_{F_1}$ is the ring of integers of $F_1$ and $O_F$ is the rank $n$ ring of integers of $F$, makes $F$ a compatible group. Furthermore, the Haar measure $\mu_{F_1}$ on $F_1$ such that $\mu_{F_1} (\oo_{F_1}) = 1$ satisfies the required formula $\mu_{F_1}(V) = |G_{V, \gamma} : G_{U, \gamma}| \mu_{F_1} (U)$, and so the map $$\mu( \alpha + t_1^{\gamma_1} \dots t_n^{\gamma_n} O_F) = q^{-\gamma_1} Y_2^{\gamma_2} \dots Y_n^{\gamma_n}$$ extends to Fesenko's measure on ddd-sets.
\end{example}

\begin{remark}
We may also apply the results of this section to only a partial level structure. For example, we may consider the subgroups $K_{i,j}=I_2 + t_1^i t_2^j M_2(O_F)$ of $\GL_2(F)$ with $i,j>0$ as forming part of a level structure over $\GL_2(\overline{F})$, with $K_{i,j}$ corresponding to the pair $(K_i, j)$. This does not define a level structure on $\GL_2(F)$ as per our definition since, for example, there is no distinguished set corresponding to $( K_1, -1)$. However as in \cite{waller-GL2} one may define a measure on the ddd-sets generated by the $K_{i,j}$, and this coincides with the measure above wherever both are defined. This suggests that one should be able to extend $\{ K_{i,j} : i,j>0 \}$ to a full level structure for $\GL_2(F)$ over $\GL_2(\overline{F})$.
\end{remark}

\begin{example}
Let $F$ be an $n$-dimensional nonarchimedean local field, and let $G = \GL_m (F)$. For $\gamma_1 \in \mbb{Z}$, $\gamma = (\gamma_2, \dots, \gamma_n) \in \mbb{Z}^{n-1}$, such that $(\gamma_1, \dots, \gamma_n) > (0, \dots, 0)$, we may consider the subgroups $$K_{\gamma_1, \dots, \gamma_n} = I_m + t_1^{\gamma_1} \dots t_n^{\gamma_n} M_m (O_F),$$ where $O_F$ is the rank $n$ ring of integers of $F$. The association $$(K_{\gamma_1}, \gamma) \mapsto K_{\gamma_1, \dots, \gamma_n},$$ equips $G$ with a partial level structure over $\GL_m(F_1)$ of elevation $(n-1)$, where $F_1$ is the $(n-1)^{st}$ residue field of $F$. The map $$\mu(K_{\gamma_1, \dots, \gamma_n}) = \frac{q^{\frac{1}{2}n(n+1)}}{(q^n - 1) (q^{n-1}-1) \dots (q-1)}q^{- n^2 \gamma_1} Y_2^{\gamma_2} \dots Y_n^{\gamma_n} = \mu_{\GL_m(F_1)} (K_m) Y^{\gamma}$$ is a left-invariant, finitely additive measure on the ddd-sets generated by the $K_{\gamma_1, \dots, \gamma_n}$ which satisfies $\mu( \GL_m (O_F)\!) = 1$. Setting $Y_k = X_k^{n^2}$, one obtains in a similar fashion to Theorem 5.5 of \cite{waller-GL2} a nice compatibility between the measure on $G$ and the measure on $F$. More precisely, for $g = (g_{r,s}) \in G$ we have the equality of differentials $$dg = | \det g|_F^{-n} \prod dg_{r,s}. $$
\end{example}

\begin{remark}
The constructions in \cite{waller-GL2} may be able to be modified in order to construct an invariant measure on groups $G$ which are not necessarily compatible. This should agree with the definition in Theorem \ref{otmeasure} in the compatible case, but in general will not be compatible with the Haar measure on $X$.
\end{remark}

\begin{remark}
Consider the following conjecture: any finite union of subsets of type $S$ of a (compatible) group $G$ is also of type $S$. If this conjecture is true, it also makes sense to extend the measure by defining all subsets of type $S$ to have zero volume.
\end{remark}

\section{Induced level structures} \label{sec:induced}
Let $G$ be levelled over $X$, and let $H$ be a subgroup of $G$. Recall from \cite{waller-level} that one may define an induced level structure on $H$ over $X$.

\begin{proposition}
If $G$ has a level structure $\mc{L}$ over $X$, and $H$ is a subgroup of $G$, the collection $\mc{L}_H = \{ H_{U,\gamma} = H \cap G_{U \gamma} : G_{U,\gamma} \in \mc{L} \}$ of subsets of $H$ is a level structure for $H$ over $X$, called the induced level structure.
\end{proposition}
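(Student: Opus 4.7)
The plan is to verify the four axioms of Definition \ref{levelstructuredef} directly, pulling back the corresponding properties from $\mc{L}$ by intersecting with $H$. The arguments are purely set-theoretic distributions of intersection over union and intersection, so there is no serious obstacle — the main point is just to check that everything is preserved by passing to the subgroup.

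First I would observe that the indexing of $\mc{L}_H$ by $\mc{U}(1) \times \mbb{Z}^e$ is inherited from the indexing of $\mc{L}$: we simply define $H_{U,\gamma} := H \cap G_{U,\gamma}$, where $(U,\gamma) \in \mc{U}(1) \times \mbb{Z}^e$ parametrises $G_{U,\gamma} \in \mc{L}$. This takes care of condition (2). For condition (1), since each $G_{U,\gamma}$ contains the identity element of $G$ by hypothesis, and this identity is also the identity of $H$ (as $H$ is a subgroup), we have $e_H \in H \cap G_{U,\gamma} = H_{U,\gamma}$.

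Next I would verify condition (3). For $V \subset U$ in $\mc{U}(1)$ and $\gamma \le \delta$ in $\mbb{Z}^e$, I compute
\[
H_{V,\gamma} \cap H_{U,\delta} = (H \cap G_{V,\gamma}) \cap (H \cap G_{U,\delta}) = H \cap (G_{V,\gamma} \cap G_{U,\delta}) = H \cap G_{V,\delta} = H_{V,\delta},
\]
where the third equality is condition (3) applied to $\mc{L}$. Condition (4) is verified in exactly the same way for fixed $\gamma \in \mbb{Z}^e$:
\[
H_{U,\gamma} \cup H_{V,\gamma} = H \cap (G_{U,\gamma} \cup G_{V,\gamma}) = H \cap G_{U \cup V, \gamma} = H_{U \cup V,\gamma},
\]
and analogously for the intersection, using distributivity of intersection over union and the corresponding identities for $\mc{L}$.

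There is no real obstacle here; the only mild subtlety worth a remark is that distinct pairs $(U,\gamma)$ may give rise to the same subset of $H$ (for instance if $H$ misses a portion of $G$), but the definition of a level structure only asks for an indexed collection, not an injective parametrisation, so this causes no issue.
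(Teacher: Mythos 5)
Your verification is correct: intersecting with $H$ preserves all four axioms of Definition \ref{levelstructuredef} by elementary set-theoretic distributivity, and your remark about non-injective parametrisation is a fair observation that does not affect the conclusion. The paper itself offers no argument here, deferring entirely to Lemma 6.2 of \cite{waller-level}, so your direct check is exactly the expected content of that cited result.
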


\begin{proof}
This is Lemma 6.2 of \cite{waller-level}.
\end{proof}

In the case of algebraic subgroups, one may in fact consider an induced (partial) level structure over a more appropriate base than the original one.

\begin{proposition}
Suppose $G=\GL_m(F)$ for an $n$-dimensional nonarchimedean field $F$, with the partial level structure over $\GL_m(F_1)$ given by the distinguished subgroups $K_{\gamma_1, \dots, \gamma_n}$. Let $H$ be a subgroup of $G$ defined by finitely many polynomial equations $f_1, \dots, f_k \in O_F[\![X_1, \dots, X_{m^2}]\!]$, i.e. $$ H = \{ (g_{r,s}) \in G : f_i ( (g_{r,s} ) ) = 0, 1 \le i \le k  \}, $$ and let $\overline{H}$ be the subgroup of $\GL_m(F_1)$ defined by the reductions $$\bar f_1, \dots, \bar f_k \in F_1 [\![X_1, \dots, X_{m^2} ]\!].$$ If all of the polynomials $\bar f_i$ are separable (i.e. all the roots are simple), the association $(U \cap \overline{H} , \gamma) \mapsto H \cap G_{U,\gamma}$ defines a (partial) level structure for $H$ over $\overline{H}$ whose distinguished sets coincide with those of the induced level structure.
\end{proposition}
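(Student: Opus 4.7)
The plan is to verify the four conditions of Definition~\ref{levelstructuredef} for the proposed collection, and then observe that its distinguished sets agree with those of the induced level structure on $H$ over $\GL_m(F_1)$. I fix the basis $\mc{U}(1)_{\GL_m(F_1)} = \{K_{\gamma_1}\}_{\gamma_1 > 0}$; then $\mc{U}(1)_{\overline{H}} := \{K_{\gamma_1} \cap \overline{H}\}_{\gamma_1 > 0}$ is the corresponding basis of neighbourhoods of the identity in $\overline{H}$ for its subspace topology inherited from $\GL_m(F_1)$.

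The first task is to show that the association $(K_{\gamma_1} \cap \overline{H}, \gamma) \mapsto H \cap K_{\gamma_1, \dots, \gamma_n}$ is well-defined, i.e.\ that distinct indices $\gamma_1$ genuinely produce distinct neighbourhoods $K_{\gamma_1} \cap \overline{H}$; this will be the main obstacle, and is where the separability hypothesis enters. A multivariate Hensel argument applied to the Jacobian criterion for the separable system $\bar f_1, \dots, \bar f_k$ shows that the reduction map $H \to \overline{H}$ is surjective and, more strongly, induces surjections $H \cap K_{\gamma_1, \dots, \gamma_n} \to \overline{H} \cap K_{\gamma_1}$ for every admissible tuple. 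In particular distinct $\gamma_1$ produce distinct intersections $K_{\gamma_1} \cap \overline{H}$, so the proposed reindexing is both well-defined and injective.

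Next I would verify axioms (1)--(4). Axioms (1) and (2) hold by construction (the identity lies in each set $H \cap K_{\gamma_1,\dots,\gamma_n}$, and the indexing set has the required form). For axioms (3) and (4) I would combine the set-theoretic identities $(U_1 \cap \overline{H}) \cap (U_2 \cap \overline{H}) = (U_1 \cap U_2) \cap \overline{H}$ and $(U_1 \cap \overline{H}) \cup (U_2 \cap \overline{H}) = (U_1 \cup U_2) \cap \overline{H}$ with the distributivity of $H \cap (-)$ over $\cap$ and $\cup$, then invoke the corresponding axioms for the level structure on $G$, and finally intersect with $H$; this reduces each required identity in $H$ to one already known in $G$.

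Finally, the coincidence of distinguished sets is essentially formal: the distinguished sets $h \cdot (H \cap G_{U,\gamma})$ of the proposed structure, with $h \in H$, are literally the same subsets of $H$ as those $H \cap h G_{U,\gamma}$ of the induced level structure, since $h H = H$. The only genuinely non-trivial step is the first one: without separability, the reduction map $H \to \overline{H}$ could fail to be surjective on the congruence filtration, at which point distinct $\gamma_1$ might collapse to the same $K_{\gamma_1} \cap \overline{H}$ and the reindexing would break down.
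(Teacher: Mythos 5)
Your treatment of axioms (1)--(4) and of the coincidence of distinguished sets matches the paper's, and you correctly identify separability plus a multivariate Hensel argument as the engine of the proof. However, there is a genuine gap in your well-definedness step, and it comes from misstating what well-definedness requires. The association $(U \cap \overline{H}, \gamma) \mapsto H \cap G_{U,\gamma}$ is a map whose \emph{input} is the set $K_{\gamma_1} \cap \overline{H}$, not the index $\gamma_1$; so the danger is not that distinct $\gamma_1$ might ``collapse'' (that is harmless for a map, though fatal for injectivity), but that two indices $\alpha \neq \beta$ with $K_\alpha \cap \overline{H} = K_\beta \cap \overline{H}$ might be assigned \emph{different} sets $H \cap K_{\alpha,\gamma} \neq H \cap K_{\beta,\gamma}$. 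You instead set out to prove that such collisions never occur, and your justification --- ``the reduction maps $H \cap K_{\gamma_1,\dots,\gamma_n} \to \overline{H}\cap K_{\gamma_1}$ are surjective, in particular distinct $\gamma_1$ produce distinct intersections'' --- is a non sequitur: surjectivity of each reduction map says nothing about the targets being pairwise distinct. Indeed the claim is false in general; if $\overline{H}$ is small (in the extreme case trivial), then $K_{\gamma_1} \cap \overline{H}$ is the same set for all sufficiently large $\gamma_1$, yet the proposition is still supposed to hold.

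The paper's proof handles exactly this collision case, and it uses the \emph{uniqueness} half of Hensel's Lemma rather than the existence half. Assuming $\beta \le \alpha$ and $\overline{H}\cap K_\alpha = \overline{H}\cap K_\beta$, one takes $g \in H \cap K_{\beta,0}$; its reduction $\bar g$ lies in $\overline{H}\cap K_\beta = \overline{H}\cap K_\alpha$, and separability gives a unique root $g'$ of the $f_i$ in $K_{\beta,0}$ lifting $\bar g$ as well as a unique root $g''$ in $K_{\alpha,0}$ lifting $\bar g$. Since $g$ is such a root, $g = g'$, and since $K_{\alpha,0}\subset K_{\beta,0}$, uniqueness in $K_{\beta,0}$ forces $g' = g''$, whence $g \in H \cap K_{\alpha,0}$; the case of general $\gamma$ is then reduced to $\gamma = 0$ via the isomorphism $K_{\gamma_1,\gamma} \to K_{\gamma_1,0}$. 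Your proposal has all the right ingredients in the room but never performs this comparison of lifts, which is the one step where separability genuinely does work; as written, the argument would fail precisely in the situations the well-definedness check exists to address.
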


\begin{proof}
We must first check that the association is well-defined. In other words, if $\overline{H} \cap K_{\alpha} = \overline{H} \cap K_{\beta}$ for $\alpha, \beta \in \mbb{Z}$ we must make sure that $H \cap K_{\alpha, \gamma} = H \cap K_{\beta, \gamma}$ for all $\gamma \in \mbb{Z}^{n-1}$. Without loss of generality, we may assume that $\beta \le \alpha$, so that we have $H \cap K_{\alpha, \gamma} \subset H \cap K_{\beta, \gamma}$.

Let $g = (g_{r,s}) \in H \cap K_{\beta, 0}$. Then the reduction $\bar g = (\bar g_{r,s}) \in K_{\beta}$ is a root of all of the polynomials $\bar f_i$, and so we have $\bar g \in \overline{H} \cap K_{\beta} = \overline{H} \cap K_{\alpha}$. By Hensel's Lemma, there exists a unique $g' \in K_{\beta,0}$ with $\bar{g'} = \bar g$ and $f_i(g')=0$, and there exists a unique $g'' \in K_{\alpha,0}$ with $\bar{g'} = \bar g$ and $f_i(g'')= 0$. Since $f_i(g) = 0$, uniqueness of $g'$ forces $g=g'$. Furthermore, since $K_{\alpha, 0} \subset K_{\beta, 0}$, we have $g'' \in K_{\beta,0}$, and so uniqueness of $g'$ forces also $g'=g''$. We thus have $g = g'' \in H \cap K_{\alpha,0}$ and so we have the required equality for $\gamma =0$. The result for general $\gamma$ then follows from the fact that the map $I_m + t_1^{\gamma_1} \dots t_n^{\gamma_n} M \mapsto I_m + t_1^{\gamma_1} M$ is an isomorphism $K_{\gamma_1, \gamma} \rightarrow K_{\gamma_1, 0}$.

The fact that the distinguished sets coincide with those of the induced level structure is then immediate from the definition, and this implies that conditions (1) and (2) in the definition of a level structure hold. It remains to check (3) and (4).

Let $U,V \in \mc{U}(1)$ and $\gamma \le \delta \in \mbb{Z}^{n-1}$. We have $$G_{V \cap \overline{H}, \gamma} \cap G_{U \cap \overline{H}, \delta} = (H \cap G_{V, \gamma} ) \cap (H \cap G_{U, \delta}) = H \cap G_{V, \delta} = G_{V \cap \overline{H}, \delta},$$ and so (3) is satisfied.

Similarly, \begin{align*}
G_{U \cap \overline{H}, \gamma} \cup G_{V \cap \overline{H}, \gamma} &= (H \cap G_{U, \gamma}) \cup (H \cap G_{V, \gamma}) \\
&= H \cap (G_{U, \gamma} \cup G_{V, \gamma}) \\
&= H \cap G_{U \cup V, \gamma} = G_{(U \cup V) \cap \overline{H}, \gamma},\end{align*} and \begin{align*}
G_{U \cap \overline{H}, \gamma} \cap G_{V \cap \overline{H}, \gamma} &= (H \cap G_{U, \gamma}) \cap (H \cap G_{V, \gamma}) \\
&= H \cap (G_{U, \gamma} \cap G_{V, \gamma}) \\
&= H \cap G_{U \cap V, \gamma} \\
&= G_{(U \cap V) \cap \overline{H}, \gamma}, \end{align*} and so (4) is satisfied.
\end{proof}

\begin{remark}
The proof of the above Proposition in fact works for algebraic groups over any Henselian ring $R$ whose residue field is locally compact.
\end{remark}

In particular, we have the following.

\begin{theorem}
Let $G$ be an algebraic subgroup of $\GL_m(F)$ defined by the equations $f_1, \dots f_k$, and let $\overline{G}$ be the subgroup of $\GL_m(F_1)$ defined by the reductions $\bar f_1, \dots, \bar f_k$. If $G$ is properly suspended and rigid over $\overline{G}$ with respect to the induced (partial) level structure, if $|K_{i, \gamma} \cap G   : K_{j, \gamma} \cap G | = |K_{i} \cap \overline{G} : K_{j} \cap \overline{G}| $ for all $0<i \le j$ and all $\gamma > 0 \in \mbb{Z}^{n-1}$, and if the polynomials $\bar f_1, \dots, \bar f_k$ have no multiple roots, there is a left-invariant, finitely additive measure on $G$ given by $$\mu(K_{i, \gamma} \cap G) = \bar \mu (K_{i} \cap \overline{G}) Y^\gamma,$$
where $\bar \mu$ is the Haar measure on $\overline{G}$.
\end{theorem}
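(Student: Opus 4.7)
The plan is to reduce the statement to a direct application of Theorem \ref{otmeasure}, with base $X = \overline{G}$ equipped with its Haar measure $\bar\mu$. The previous proposition (extended to this Henselian setting by the remark immediately following it) applies because the reductions $\bar f_i$ have no multiple roots, hence are separable; this furnishes a partial level structure for $G$ over $\overline{G}$ whose distinguished subgroups are precisely the $K_{i,\gamma} \cap G$, indexed by the pairs $(K_i \cap \overline{G}, \gamma)$. Note that $\overline{G}$ is locally compact as a closed subgroup of $\GL_m(F_1)$, so a Haar measure $\bar\mu$ is available.

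Next I would verify the hypotheses of Theorem \ref{otmeasure} in this setting. Proper suspension and rigidity are assumed outright. For ordered type, I would argue that every distinguished subset of $G$ in the induced structure is of the form $H \cap D$ for some distinguished $D \subset \GL_m(F)$, so that the ordered-type intersection property passes to $G$: if $(H \cap D_1) \cap (H \cap D_2) \neq \emptyset$ then $D_1 \cap D_2 \neq \emptyset$, forcing $D_1 \cap D_2 \in \{D_1, D_2\}$ and hence $(H \cap D_1) \cap (H \cap D_2) \in \{H \cap D_1, H \cap D_2\}$. The compatibility condition is exactly the hypothesis $|K_{i,\gamma} \cap G : K_{j,\gamma} \cap G| = |K_i \cap \overline{G} : K_j \cap \overline{G}|$, since the right-hand side is independent of $\gamma$. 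Finally, for the Haar measure identity required by Theorem \ref{otmeasure}, taking $U = K_i \cap \overline{G} \subset V = K_j \cap \overline{G}$ (so $i \ge j$), the compatibility hypothesis combined with the standard Haar measure scaling on the locally compact group $\overline{G}$ yields
\[
\bar\mu(V) = |V : U|\,\bar\mu(U) = |G_{V,\gamma} : G_{U,\gamma}|\,\bar\mu(U).
\]

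With all hypotheses verified, Theorem \ref{otmeasure} directly produces a left-invariant, finitely additive measure on the ddd-sets of $G$ generated by the $K_{i,\gamma} \cap G$, taking the prescribed value $\mu(K_{i,\gamma} \cap G) = \bar\mu(K_i \cap \overline{G}) Y^\gamma$ on distinguished subgroups. I expect the main obstacle to be the verification of ordered type, since it requires pinning down the distinguished sets of $G$ in the induced structure (which a priori are $H$-translates of $H \cap G_{U,\gamma}$, not arbitrary $G$-translates) and then appealing to the fact that $\GL_m(F)$ with its partial level structure is of ordered type — a property stated for $\GL_2(F)$ in the examples but not proved in full generality in the text, and which would deserve at least a short justifying remark. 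Once that point is secured, the remaining steps are essentially bookkeeping that translates the theorem's hypotheses through the induced level structure.
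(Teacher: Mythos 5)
Your proposal is correct and follows essentially the same route as the paper: the paper's own proof is a one-line reduction to Theorem \ref{otmeasure}, citing proper suspension and rigidity as hypotheses and ordered type as inherited by $G$ from the ambient group under the induced level structure, with the index equality supplying the required compatibility. Your version merely spells out the inheritance of ordered type and the identification of the induced distinguished sets in more detail than the paper does.
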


\begin{proof}
The equality $|K_{i, \gamma} \cap G   : K_{j, \gamma} \cap G | = |K_{i} \cap \overline{G} : K_{j} \cap \overline{G}| $ is exactly the one that is required for the measure to exist in the previous section under the conditions of being properly suspended, rigid (which are both true for $G$ by assumption), and of ordered type (which is true of $G$ as a subgroup of a group of ordered type).
\end{proof}

Once we already know examples where this Theorem holds, one may then use the following Lemma to easily produce further examples.

\begin{lemma} \label{shortexactmeasure}
Let $$1 \rightarrow N \rightarrow G \rightarrow Q \rightarrow 1$$ be a short exact sequence of algebraic subgroups of $\GL_m(F)$. If any two of $N$, $G$, $Q$ satisfy the conditions of Theorem \ref{alggroupmeasure}, then so does the third.
\end{lemma}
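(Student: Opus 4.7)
The plan is to exploit the compatibility of indices and of the associated Haar measures along the short exact sequence $1 \to N \to G \to Q \to 1$. The central technical step is to verify that, for each valid pair $(i,\gamma)$, the restricted sequence
\begin{equation*}
1 \to N \cap K_{i,\gamma} \to G \cap K_{i,\gamma} \to Q \cap K_{i,\gamma} \to 1
\end{equation*}
is exact, and similarly at the residue-field level. Injectivity on the left and exactness in the middle are formal; the real content is surjectivity of the restricted quotient map, which follows from the same Hensel-type lifting used in the preceding Proposition applied to the separable defining polynomials of all three groups.

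Given this restricted exactness, the usual multiplicative formulas for indices apply, namely
\begin{equation*}
|K_{i,\gamma} \cap G : K_{j,\gamma} \cap G| = |K_{i,\gamma} \cap N : K_{j,\gamma} \cap N| \cdot |K_{i,\gamma} \cap Q : K_{j,\gamma} \cap Q|,
\end{equation*}
and the analogue at the residue level. The Haar measure on a short exact sequence of locally compact groups is also multiplicative (after appropriate normalisation), so that $\bar\mu_{\overline{G}}(K_i \cap \overline{G}) = \bar\mu_{\overline{N}}(K_i \cap \overline{N}) \cdot \bar\mu_{\overline{Q}}(K_i \cap \overline{Q})$. These two factorisations immediately imply that the key compatibility equality $|K_{i,\gamma} \cap G : K_{j,\gamma} \cap G| = |K_i \cap \overline{G} : K_j \cap \overline{G}|$ from Theorem \ref{alggroupmeasure} holds for the third group whenever it holds for the other two, by cancellation in the multiplicative formulas.

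The remaining structural conditions are inherited by similar reasoning. Being properly suspended and rigid are both conditions relating finite index to equality of level; since all three level structures are induced from the common ambient structure on $\GL_m(F)$, these conditions factor multiplicatively along the exact sequence. For the separability of the defining polynomials, I would fix compatible choices: the equations for $N$ can be taken to be those for $G$ (pulled back through the inclusion) together with extra equations cutting $N$ out inside $G$, while the equations for $Q$ correspond to the complementary block. Separability is a Jacobian condition that decomposes along this block structure, so that if it holds for two of the three groups it automatically holds for the third.

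The hard part will be justifying surjectivity of the restricted quotient map $G \cap K_{i,\gamma} \to Q \cap K_{i,\gamma}$, i.e.\ showing that every element of $Q$ lying in the congruence subgroup $K_{i,\gamma}$ lifts to an element of $G$ still lying in $K_{i,\gamma}$. This is essentially the Hensel-type argument from the proof of the previous Proposition, but it must be adapted to treat $N$ and $Q$ simultaneously, and it is at this point that the separability hypothesis is used most delicately; everything else then reduces to bookkeeping of indices and measures.
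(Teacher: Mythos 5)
Your proposal is correct and follows essentially the same route as the paper: both reduce the claim to the multiplicativity of the indices $|K_{i,\gamma}\cap G : K_{j,\gamma}\cap G|$ and $|K_i\cap\overline{G}:K_j\cap\overline{G}|$ along the short exact sequence, followed by cancellation (the paper obtains these factorisations by applying the snake lemma to the restricted sequences, whose row-exactness it simply takes for granted). The surjectivity of the restricted quotient maps that you single out as the hard part is indeed glossed over in the paper's proof, and your additional remarks on transferring the properly suspended, rigid, and separability conditions address hypotheses the paper's own proof does not verify at all.
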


\begin{proof}
Write ${N_{i, \gamma}} = K_{i, \gamma} \cap N$, ${G_{i, \gamma}} = K_{i, \gamma} \cap G$, ${Q_{i, \gamma}} = K_{i, \gamma} \cap Q$, and similarly write $N_i = K_i \cap N$, $G_i = K_i \cap G$, $Q_i = K_i \cap Q$. For $i < \ell$, applying the snake lemma to the commutative diagrams
\begin{displaymath}
\begin{tikzcd}
1 \arrow[r] & {N_{\ell}} \arrow[r] \arrow[d] & {G_{\ell}} \arrow[r] \arrow[d] & {Q_{\ell}} \arrow[r] \arrow[d] & 1 \\
1 \arrow[r] & {N_{i}} \arrow[r] & {G_{i}} \arrow[r] & {Q_{i}} \arrow[r] & 1 
\end{tikzcd}
\end{displaymath}
and
\begin{displaymath}
\begin{tikzcd}
1 \arrow[r] & {N_{\ell,\gamma}} \arrow[r] \arrow[d] & {G_{\ell,\gamma}} \arrow[r] \arrow[d] & {Q_{\ell,\gamma}} \arrow[r] \arrow[d] & 1 \\
1 \arrow[r] & {N_{i,\gamma}} \arrow[r] & {G_{i,\gamma}} \arrow[r] & {Q_{i,\gamma}} \arrow[r] & 1 
\end{tikzcd}
\end{displaymath}
gives the short exact sequences of cokernels 
\begin{equation*}
1 \rightarrow {N_{i}} / {N_{\ell}} \rightarrow {G_{i}} / {G_{\ell}} \rightarrow {Q_{i}} / {Q_{\ell}} \rightarrow 1,
\end{equation*}
\begin{equation*}
1 \rightarrow {N_{i,\gamma}} / {N_{\ell,\gamma}} \rightarrow {G_{i,\gamma}} / {G_{\ell,\gamma}} \rightarrow {Q_{i,\gamma}} / {Q_{\ell,\gamma}} \rightarrow 1.
\end{equation*}
In particular, we have the equalities of indices
$$ |G_i : G_\ell | = |N_i : N_\ell| \cdot |Q_i : Q_\ell|$$
$$ |G_{i, \gamma} : G_{\ell, \gamma} | = |N_{i, \gamma} : N_{\ell, \gamma}| \cdot |Q_{i,\gamma} : Q_{\ell,\gamma}|,$$
and so if any two of $N, G, Q$ satisfy the required index equalities of Theorem \ref{alggroupmeasure} then so must the third.
\end{proof}

As a first example, we obtain an invariant measure on the group $SL_m(F)$.

\begin{theorem}
Let $F$ be a two-dimensional local field and let $G=SL_m(F)$. For $(i,j) > (0,0)$ let $\widetilde{K_{i,j}} = K_{i,j} \cap G$. The map $$\mu(g \widetilde{K_{i,j}}) = \lambda q^{-(m^2 -1)i}X^{(m^2-1)j}$$ for $\lambda \in \mbb{R}(\!(X)\!)^\times$ is a left-invariant, finitely additive measure on $G$.
\end{theorem}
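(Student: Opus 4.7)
The plan is to apply Theorem \ref{alggroupmeasure} to $G = SL_m(F)$, viewed as the algebraic subgroup of $\GL_m(F)$ cut out by the single polynomial equation $\det(X_{r,s}) - 1 = 0$. The reduction modulo the maximal ideal of $\oo_F$ is $\det(\bar X_{r,s}) - 1$, whose zero set is $SL_m(F_1)$, and the smoothness of $SL_m$ as an algebraic group ensures this polynomial has no multiple roots in the sense required by Theorem \ref{alggroupmeasure}.

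The main conditions to verify are that $SL_m(F)$ is properly suspended and rigid over $SL_m(F_1)$, and that the index equality
$$|K_{i,\gamma} \cap SL_m(F) : K_{j,\gamma} \cap SL_m(F)| = |K_i \cap SL_m(F_1) : K_j \cap SL_m(F_1)|$$
holds for $0 < i \le j$ and $\gamma > 0$. The ordered type property is automatic since $SL_m(F) \subset \GL_m(F)$, while properly suspended and rigid pass to the algebraic subgroup from the corresponding properties of $\GL_m(F)$ established in the previous example, both being conditions on the indexing of the filtration which restrict compatibly.

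For the index equality I would invoke Lemma \ref{shortexactmeasure} applied to the short exact sequence of algebraic subgroups
$$1 \to SL_m(F) \to \GL_m(F) \xrightarrow{\det} F^\times \to 1.$$
The middle term satisfies the hypotheses of Theorem \ref{alggroupmeasure} by the previous example, and the quotient $F^\times = \GL_1(F)$ satisfies them as the $m=1$ case of that same example; the lemma then transfers the equality to the kernel $SL_m(F)$. The explicit formula then takes the form $\mu(\widetilde{K_{i,j}}) = \bar\mu(K_i \cap SL_m(F_1))\, Y^j$, where each descent from level $i$ to $i+1$ inside $SL_m$ of a one-dimensional local field contributes a factor of $q^{m^2-1}$ (the algebraic dimension of $SL_m$) to the index, producing the factor $q^{-(m^2-1)i}$ up to base normalisation; the substitution $Y = X^{m^2-1}$, analogous to $Y_k = X_k^{n^2}$ used for $\GL_m$, then yields $X^{(m^2-1)j}$, with the scalar $\lambda$ absorbing the Haar-measure scaling together with any residual rescaling ambiguity.

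The most delicate point will be checking that the determinant really does induce a surjection compatible with the distinguished sets, so that the snake lemma in the proof of Lemma \ref{shortexactmeasure} applies to the filtrations $K_{i,\gamma} \cap SL_m(F)$, $K_{i,\gamma} \cap \GL_m(F)$, $K_{i,\gamma} \cap F^\times$. This reduces to Hensel's lemma applied to $\det(I + t_1^i t_2^\gamma Y) = u$ for units $u$ congruent to $1$ modulo $t_1^i t_2^\gamma$, which is where the separability hypothesis for algebraic subgroups really earns its keep; once this compatibility is in place the snake-lemma computation delivers both index equalities simultaneously, and invoking Theorem \ref{alggroupmeasure} completes the proof.
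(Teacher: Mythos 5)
Your proposal is correct and follows essentially the same route as the paper: both deduce the result by applying Lemma \ref{shortexactmeasure} to a short exact sequence linking $SL_m(F)$, $\GL_m(F)$, and $F^\times$, with the latter two handled by the preceding example, and both extract the exponent $m^2-1$ from the ratio $\mu_{\GL_m}/\mu_{F^\times}$ of measures on distinguished sets. The only difference is cosmetic but in your favour: you write the sequence in its standard orientation $1 \to SL_m(F) \to \GL_m(F) \xrightarrow{\det} F^\times \to 1$ and explicitly note the Hensel-type surjectivity of $\det$ on the filtration subgroups needed for the snake lemma, whereas the paper displays the sequence with $F^\times$ as kernel and $SL_m(F)$ as quotient and leaves that compatibility implicit.
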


\begin{proof}
Here the required polynomial is simply $\det - 1 $, and we may apply Lemma \ref{shortexactmeasure} to the short exact sequence $$1 \rightarrow F^\times \rightarrow \GL_m(F) \rightarrow SL_m(F) \rightarrow 1 $$ to deduce the existence of an invariant measure $\mu_{SL_m}$ on $SL_m(F)$. To see that it may be written in the form $\mu_{SL_m}(g \widetilde{K_{i,j}}) = \lambda q^{-(m^2 -1)i}X^{(m^2-1)j}$, note that the proof of Lemma \ref{shortexactmeasure} implies that we may organise so that $$\mu_Q(D) = \frac{ \mu_G(D)}{\mu_N(D)}$$ for any distinguished set $D$.
\end{proof}

\begin{example}
Recall that the groups $\textbf{A}_S$ and $\mbb{A}_{S_{'}}$ of geometric and analytic adeles associated to a surface $S$ arise as restricted products of two-dimensional local fields. Since the functor $\GL_2 (\cdot)$ behaves well with respect to (restricted) products, we may consider the groups $\GL_2( \textbf{A}_S)$ and $\GL_2(\mbb{A}_{S_{'}})$, along with their algebraic subgroups with the induced level structure. Since the formula $$\mu_X(V) = |G_{V, \gamma} : G_{U, \gamma}| \mu_X (U)$$ holds at each local component, and since all but finitely many will be equal to $1$, it in fact holds globally. There thus exists a left-invariant measure on any properly suspended, rigid adelic algebraic group $G$ defined by polynomials with separable reduction.
\end{example}

\begin{remark}
In \cite{fesenko-aoas2}, Fesenko shows that it is not possible to define a measure on the geometric adeles $\textbf{A}_S$ which is both compatible with the one-dimensional theory (for example, in the sense of the above example) \textit{and} satisfies the important property $\mu(\oo \textbf{A}_S)=1$ for an appropriate integral structure $\oo \textbf{A}_S$. Indeed, this is one of the major motivations for the introduction of the analytic adelic structures.
\end{remark}

\begin{remark}
We have seen that the existence of an invariant measure on a group $G$ which is compatible with the measure on $X$ heavily depends on the choice of the base space $X$ (as one would expect!). However, once defined, the measure can be seen to be intrinsic to $G$; if we afterwards consider $G$ with the same level structure but over a different base $X'$, the measure on $G$ still exists, it just may not have anything to do with the space $X'$. (Consider, for example, $\GL_2(F)$ levelled over $\GL_2 (\overline{F})$ versus $\GL_2(F)$ levelled over $F$.)

This seems to be a very common theme concerning groups with level structure. While the topological and analytic properties of $G$ should be essentially self-contained (since the information is bound to the level structure, which is in principle just a collection of subsets of $G$), by choosing the correct base $X$ for the level structure we may see various analogies between $G$ and $X$ which makes certain properties of $G$ appear more clearly.
\end{remark}

\begin{remark}
It was noticed by Morrow (\cite{morrow-fubini}) that Fubini's Theorem does not always hold for multiple integrals over a higher dimensional local field. Since this is just a particular example of our general constructions, it follows that the integral with respect to the Fesenko measure on a general $G$ does not satisfy Fubini's Theorem. One may try to generalise \cite{morrow-fubini} to investigate what Fubini type properties may hold in general.
\end{remark}

\end{document}